\newtheorem{newthm}{}[section]
\newtheorem{definition}[newthm]{Definition}
\newtheorem{theorem}[newthm]{Theorem}
\newtheorem{lemma}[newthm]{Lemma}
\newtheorem{corollary}[newthm]{Corollary}
\newtheorem{conjecture}[newthm]{Conjecture}
\newtheorem{proposition}[newthm]{Proposition}
\newtheorem{remark}[newthm]{Remark}
\newtheorem{question}[newthm]{Question}
\newcommand{\Sym}{{\mathfrak{S}}}
\newcommand{\Aut}{{\mathrm{Aut}}}
\newcommand\residualedge[1][.5cm]{\rule[0.5ex]{#1}{.6pt}}
\newcommand\staredge{\mbox{%
 \hspace{.5mm}\residualedge[1mm]\hspace{1mm}\residualedge[1mm]\hspace{1mm}\residualedge[1mm]}\hspace{.5mm}}
 \newcommand{\verteq}{\rotatebox{90}{$\,=$}}
\newcommand{\equalto}[2]{\underset{\scriptstyle\overset{\mkern4mu\verteq}{#2}}{#1}}
\begin{document}
\title{A complete classification of which $(n,k)$-star graphs are Cayley graphs}

\author{Karimah Sweet}
\address{Department of Mathematics and Statistics \\
 Oakland University \\
 Rochester, MI 48309}
\email{ksweet@oakland.edu}

\author{Li Li}
\address{Department of Mathematics and Statistics \\
 Oakland University \\
 Rochester, MI 48309}
\email{li2345@oakland.edu}
\thanks{The second author was partially supported by the
Oakland University URC Faculty Research Fellowship Award.}

\author{Eddie Cheng}
\address{Department of Mathematics and Statistics \\
 Oakland University \\
 Rochester, MI 48309}
\email{echeng@oakland.edu}

\author{L\'aszl\'o Lipt\'ak }
\address{Department of Mathematics and Statistics \\
 Oakland University \\
 Rochester, MI 48309}
\email{liptak@oakland.edu}

\author{Daniel E. Steffy}
\address{Department of Mathematics and Statistics \\
 Oakland University \\
 Rochester, MI 48309}
\email{steffy@oakland.edu}

\maketitle

\begin{abstract}
The $(n,k)$-star graphs are an important class of interconnection networks that generalize star graphs, which are superior to hypercubes.
In this paper, we continue the work begun by Cheng et al.~(Graphs and Combinatorics 2017) and complete the classification of all the $(n,k)$-star graphs that are Cayley.  

\end{abstract}

\section{Introduction}
The $(n,k)$-star graph, $S_{n,k}$, where $1\leq k<n$, has as its vertices the $k$-permutations on the set $\{1,\dots,n\}$. (A $k$-permutation on $\{1,\dots,n\}$ is an ordered $k$-tuple of distinct elements from the set.) There are two types of edges in $S_{n,k}$. A \textit{star edge} is an edge between two vertices, one of which can be obtained from the other by exchanging the symbols in position 1 and position $i$ for some $2\leq i\leq k$. A \textit{residual edge} is an edge between two vertices that differ only in their first position. Each vertex in $S_{n,k}$ is incident with $k-1$ star edges and $n-k$ residual edges.

The class of $(n,k)$-star graphs was introduced as a potential interconnection structure for distributed processor computer architectures \cite{CC}. 
In this context, such structures are often referred to as interconnection networks, and they are evaluated based on their structural properties.
Hsu and Lin \cite{HL} record recent progress in this area with an extensive bibliography.

There has been much research on the class of $(n,k)$-star graphs studying embeddings, broadcasting, Hamiltonicity and surface area as well as their applicability in theoretical computer science. Recent papers (within the past 3 years) include \cite{LX,CKR,CL,CQS,XLZHG,HHTH,LXF,DCW,WC,CLS,CHYT}. The first major result on Hamiltonicity was given in \cite{HHTH}, which proves that $(n,k)$-star graphs are Hamiltonian; in fact, an $(n,k)$-star graph  remains Hamiltonian if $n-3$ vertices and/or edges are deleted. 
(There is a conjecture that every finite connected Cayley graph contains a Hamiltonian cycle; for related work, see \cite{GM, M2015,M2016}.)
While it is well known that other classes of popular interconnection networks such as the hypercube and star graph are Cayley graphs, it has remained an open question for  $(n,k)$-star graphs. 
Our work settles this question by showing which $(n,k)$-star graphs are Cayley graphs, providing a deeper understanding of the properties of this class of interconnection networks.

Recall that if $G$ is a finite group and $S$ is a set of some of its non-identity elements, the Cayley graph $\Gamma(G,S)$ is the directed graph whose vertex set is $G$, and whose set of arcs contains  an arc from $u$ to $v$ if and only if there is an element $s\in S$ such that $v=us$. If $S\subset G$ is a subset that generates $G$, then $\Gamma(G,S)$ is connected, and if $s\in S$ implies $s^{-1}\in S$, then we can simplify $\Gamma(G,S)$ to be an undirected graph by replacing each pair of opposite arcs with an undirected edge. 

Since $S_{n,1}$ is isomorphic to the complete graph on $n$ vertices, $K_n$, and $S_{n,n-1}$ is isomorphic to the star graph $S_n$, both of which are Cayley for all $n$, we assume throughout the paper that 
$$k\geq 2 \;\textrm{ and } n\geq k+2.$$
In \cite{ CLLSS}, a classification of the $S_{n,k}$ graphs that are Cayley is given in the case that $k=2$, as well as a necessary condition for $S_{n,k}$ to be Cayley for $k=3$. In this paper we use Sabidussi's Theorem \cite[Lemma 4]{Sabidussi} to study for which $n$ and $k$, $S_{n,k}$ is a Cayley graph. Let us first recall Sabidussi's Theorem and give its corollary for $(n,k)$-star graphs.

\begin{theorem}[Sabidussi's Theorem, \cite{Sabidussi}]\label{Sabidussi}
Let $v$ be a vertex of a finite graph $\Gamma$. The following are equivalent:

(i) $\Gamma$ is a Cayley graph; 

(ii) there is a subgroup $G\le \Aut(\Gamma)$ such that the map $\varepsilon: G\to V(\Gamma)$, $g\mapsto g(v)$ is bijective;

(iii) there is a subgroup $G\le \Aut(\Gamma)$ such that $|G|=|V(\Gamma)|$ and the stabilizer group $G_v$ is trivial;

(iv) $\Aut(\Gamma)$ contains a subgroup that acts regularly (i.e. transitively and freely) on $V(\Gamma)$.

\end{theorem}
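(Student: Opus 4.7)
The plan is to prove the cycle of implications $(i) \Rightarrow (ii) \Rightarrow (iii) \Rightarrow (iv) \Rightarrow (i)$. The key idea throughout is the left-regular representation: for a finite group $H$, the maps $L_h: x \mapsto hx$ embed $H$ into the symmetric group on $H$, and when $\Gamma = \Gamma(H,S)$ is a Cayley graph these maps preserve adjacency because $L_h$ sends an edge $\{x, xs\}$ to $\{hx, (hx)s\}$. Conversely, a regular subgroup of $\Aut(\Gamma)$ lets us reconstruct a Cayley presentation of $\Gamma$.

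For $(i) \Rightarrow (ii)$, take $G = \{L_h : h \in H\} \le \Aut(\Gamma)$. For any vertex $v \in V(\Gamma) = H$, the evaluation map $\varepsilon: L_h \mapsto L_h(v) = hv$ is bijective, since $w \mapsto L_{wv^{-1}}$ is its inverse. For $(ii) \Rightarrow (iii)$, bijectivity of $\varepsilon$ forces $|G| = |V(\Gamma)|$; and because the identity of $G$ already lies in $G_v$ while $\varepsilon^{-1}(v)$ is a single element, one concludes $G_v = \{e\}$. For $(iii) \Rightarrow (iv)$, the orbit-stabilizer theorem gives $|G \cdot v| = |G|/|G_v| = |V(\Gamma)|$, which is transitivity; combined with the fact that stabilizers of points in a transitive action are conjugate, the triviality of $G_v$ propagates to all vertices, yielding a free (hence regular) action.

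The substantive step is $(iv) \Rightarrow (i)$. Given a regular subgroup $G \le \Aut(\Gamma)$, fix a vertex $v$ and define the connection set $S := \{g \in G \setminus \{e\} : g(v) \sim v\}$. Since $\Gamma$ is undirected, $g(v) \sim v$ implies $v \sim g^{-1}(v)$, so $S = S^{-1}$. Regularity makes $\phi: G \to V(\Gamma)$, $g \mapsto g(v)$, a bijection, and I claim $\phi$ is a graph isomorphism $\Gamma(G,S) \to \Gamma$: indeed $g$ and $h$ are adjacent in $\Gamma(G,S)$ iff $g^{-1}h \in S$ iff $g^{-1}h(v) \sim v$ iff $h(v) \sim g(v)$, by applying the automorphism $g$ to the previous adjacency.

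I expect this final implication to be the main obstacle, since it requires recognizing that the ``hidden'' group structure on $V(\Gamma)$ is supplied by $G$ transported via $\phi$, and that the correct connection set is exactly the preimage under $\phi$ of the open neighborhood of $v$; the other three implications amount to routine orbit-counting.
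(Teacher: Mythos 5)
Your proof is correct: all four implications are sound, the connection set $S=\{g\in G\setminus\{e\}: g(v)\sim v\}$ is the right choice, and you verify the two points that matter for the paper's conventions ($S=S^{-1}$ so the Cayley graph is undirected, and $e\notin S$). The paper itself gives no proof of this theorem --- it is quoted from Sabidussi's 1958 article --- and your argument is the standard one (left-regular representation for $(i)\Rightarrow(ii)$, orbit--stabilizer for $(ii)\Rightarrow(iii)\Rightarrow(iv)$, and transport of the group structure along $g\mapsto g(v)$ for $(iv)\Rightarrow(i)$), so there is nothing to correct.
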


We need to introduce some notation to state the following corollary, which follows immediately from Sabidussi's Theorem.  Let $P(n,k)=n!/(n-k)!$ be the number of $k$-permutations of $n$.  For a  permutation  $a=(a_1,\dots,a_n)\in \Sym_n$ (in one-line notation), we define
$$\overline{a}:=[a_1,\dots,a_k],$$
which is a $k$-permutation in $n$, hence is a vertex of $S_{n,k}$. 
We say that $a$ is a representative of $\overline{a}$. We denote by $\mathbf{e}$ the identity permutation and thus $\overline{\mathbf{e}}$ is the $k$-permutation $[1,2,\dots,k]$. We will show later (Theorem \ref{thm:Aut is Sn}) that $\Aut(S_{n,k}) \cong \Sym_n\times \Sym_{k-1}$. 

\begin{corollary}\label{Sabidussi corollary}
Assume $k\geq 2$ and $n\geq k+2$.   The following are equivalent:

(i) $S_{n,k}$ is a Cayley graph;

(ii)  there is a subgroup $G\le \Sym_n\times \Sym_{k-1}$ such that the map $\varepsilon: G\to V(S_{n,k})$, $(\mu,\nu)\mapsto \overline{\mu\nu^{-1}}$ is bijective; 

(iii) there is a subgroup $G\le \Sym_n\times \Sym_{k-1}$ such that $|G|=|V(S_{n,k})|=P(n,k)$ and the stabilizer group $G_{\overline{\mathbf{e}}}$ is trivial.
\end{corollary}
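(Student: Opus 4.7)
The plan is to derive the corollary as a direct specialization of Sabidussi's Theorem to $\Gamma=S_{n,k}$, taking the distinguished vertex to be $v=\overline{\mathbf{e}}=[1,2,\dots,k]$. The only ingredient that is specific to $(n,k)$-star graphs is the identification $\Aut(S_{n,k})\cong\Sym_n\times\Sym_{k-1}$ from Theorem \ref{thm:Aut is Sn}, together with an explicit description of how a pair $(\mu,\nu)$ acts on vertices; once that action is in hand, (ii) and (iii) are simply the translations of Sabidussi's (ii) and (iii) under the isomorphism.

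First I would unpack the isomorphism $\Aut(S_{n,k})\cong\Sym_n\times\Sym_{k-1}$. The factor $\Sym_n$ acts by permuting the symbol set $\{1,\dots,n\}$ coordinatewise on each vertex, and the factor $\Sym_{k-1}$ acts by permuting the positions $\{2,\dots,k\}$ (position $1$ is distinguished because it is the only position used by residual and star edges simultaneously). Embedding $\Sym_{k-1}\hookrightarrow\Sym_n$ as the subgroup fixing $\{1,k+1,\dots,n\}$ pointwise, the action of $(\mu,\nu)$ on a vertex $[a_1,\dots,a_k]$ is
\[
(\mu,\nu)\cdot[a_1,\dots,a_k]=\bigl[\mu(a_1),\mu(a_{\nu^{-1}(2)}),\dots,\mu(a_{\nu^{-1}(k)})\bigr].
\]
Evaluating at $v=\overline{\mathbf{e}}=[1,2,\dots,k]$ and writing $\mu\nu^{-1}\in\Sym_n$ in one-line notation, we obtain
\[
(\mu,\nu)\cdot\overline{\mathbf{e}}=\bigl[\mu(1),\mu(\nu^{-1}(2)),\dots,\mu(\nu^{-1}(k))\bigr]=\overline{\mu\nu^{-1}},
\]
since $\nu^{-1}$ fixes $1$. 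This identifies the orbit map $g\mapsto g(v)$ of Sabidussi's Theorem with the map $\varepsilon$ appearing in the corollary.

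Next I would translate Sabidussi's equivalences. By Theorem \ref{Sabidussi} applied to $\Gamma=S_{n,k}$ and $v=\overline{\mathbf{e}}$, condition (i) is equivalent to the existence of a subgroup $G\le\Aut(S_{n,k})$ for which $\varepsilon\colon G\to V(S_{n,k})$ is a bijection; rewriting $\Aut(S_{n,k})$ as $\Sym_n\times\Sym_{k-1}$ and using the formula just derived gives (ii). For the equivalence of (ii) and (iii), note that $|V(S_{n,k})|=P(n,k)$, and that whenever $|G|=|V(S_{n,k})|$ the map $\varepsilon$ is a bijection if and only if it is injective, which in turn is equivalent to the stabilizer $G_{\overline{\mathbf{e}}}$ being trivial (because $\varepsilon(g_1)=\varepsilon(g_2)$ iff $g_2^{-1}g_1\in G_{\overline{\mathbf{e}}}$). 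This yields (iii) from Sabidussi's condition (iii).

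The main obstacle here is not really an obstacle in the corollary itself; it is the verification, deferred to Theorem \ref{thm:Aut is Sn}, that $\Aut(S_{n,k})\cong\Sym_n\times\Sym_{k-1}$ and that the factor $\Sym_{k-1}$ acts by permuting the positions $2,\dots,k$. Once that is taken for granted, the only subtle bookkeeping is confirming the formula $(\mu,\nu)\cdot\overline{\mathbf{e}}=\overline{\mu\nu^{-1}}$ (in particular, that $\nu^{-1}$ rather than $\nu$ appears, which reflects the convention that $\Aut$ acts on the left while the one-line notation $\overline{\mu\nu^{-1}}$ is read from left to right).
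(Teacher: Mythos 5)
Your proposal is correct and follows essentially the same route as the paper, which simply declares the corollary to be an immediate consequence of Sabidussi's Theorem via the isomorphism $\varphi$ of Theorem \ref{thm:Aut is Sn}: the computation $\varphi(\mu,\nu)(\overline{\mathbf{e}})=\overline{\mu\mathbf{e}\nu^{-1}}=\overline{\mu\nu^{-1}}$ identifies the orbit map with $\varepsilon$, exactly as you observe. Your explicit bookkeeping of the $\nu^{-1}$ convention and the counting argument for (ii)$\Leftrightarrow$(iii) just fills in details the paper leaves implicit.
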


There are several advantages of the approach using Sabidussi's Theorem (versus the approach in \cite{CLLSS}):
\medskip

(a) Computationally, to check that $S_{n,k}$ is Cayley, we only need to study the (conjugacy classes of) subgroups of order $P(n,k)$ in $\Sym_n\times\Sym_{k-1}$. This turns out to be a  much more efficient computational approach than our previous approach in \cite{CLLSS} where we constructed groups using generators and relations. In fact, by using this approach we were able to compute many examples of $S_{n,k}$ and eventually come up with the following statement (first as a conjecture, suggested by our computations).

\begin{theorem}\label{main theorem}
For $k\ge 2$, $n\ge k+2$, the graph $S_{n,k}$ is Cayley if and only if either of the following holds:

\begin{itemize}
\item $n=k+2$.

\item $k=2$ and $n$ is a prime power.

\item $k=3$ and $n-1$ is a prime power.

\item $(n,k)$ is one of the following sporadic cases:  $(n,k)=(9,4)$, $(9,6)$, $(11,4)$, $(12,5)$, $(33,4)$,  or  $(33,30)$.
\end{itemize}
\end{theorem}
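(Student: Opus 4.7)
The plan is to use Corollary~\ref{Sabidussi corollary}(iii) throughout: seek $G\le\Sym_n\times\Sym_{k-1}$ with $|G|=P(n,k)$ and $G_{\overline{\mathbf{e}}}=\{e\}$. The proof splits into constructing such $G$ in each listed case (forward) and ruling out all other cases (backward).

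For the forward direction I exhibit explicit constructions. When $n=k+2$, take $G=A_{k+2}\times\{e\}$: the order is $(k+2)!/2=P(k+2,k)$, and the only element of $\Sym_n$ other than $e$ fixing $1,\ldots,k$ pointwise is the odd transposition $(k+1,k+2)\notin A_{k+2}$. When $k=2$ and $n=p^a$, take $G=\mathrm{AGL}_1(\mathbb{F}_n)\times\{e\}$ (sharply $2$-transitive); when $k=3$ and $n-1=p^a$, take $G=\PGL_2(\mathbb{F}_{n-1})\times\{e\}$ (sharply $3$-transitive on $\mathbb{P}^1(\mathbb{F}_{n-1})$). The sporadic cases split naturally: take $G=M_{11}\times\{e\}$ for $(11,4)$ and $G=M_{12}\times\{e\}$ for $(12,5)$ (sharply $k$-transitive Mathieu groups); take $G=\PGL_2(\mathbb{F}_8)\times\Sym_{k-1}$ for $(9,4)$ and $(9,6)$, using that the point stabilizer $\mathrm{AGL}_1(\mathbb{F}_8)$ of order $56=\binom{8}{3}=\binom{8}{5}$ acts regularly on $(k-1)$-subsets of the $8$ affine points; and take $G=P\Gamma L_2(\mathbb{F}_{32})\times\Sym_{k-1}$ for $(33,4)$ and $(33,30)$, where $A\Gamma L_1(\mathbb{F}_{32})$ of order $4960=\binom{32}{3}=\binom{32}{29}$ plays the analogous role. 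Each case reduces to a cross-ratio check confirming regularity on $(k-1)$-subsets.

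For the backward direction, given a valid $G$, set $H=\pi_1(G)\le\Sym_n$. Since $\ker\pi_1\subseteq\{e\}\times\Sym_{k-1}$ has order $d$ dividing $(k-1)!$, $|H|=P(n,k)/d$. Transitivity of $G$ on $V(S_{n,k})$ forces $H$ to be $k$-homogeneous on $[n]$ with setwise stabilizer $H_{\{1,\ldots,k\}}$ transitive on $\{1,\ldots,k\}$. Invoking Zassenhaus's theorem for $k=2$, the classification of sharply $3$-transitive groups for $k=3$, Jordan's classification of sharply $k$-transitive groups for $k\ge 4$, and the Livingstone--Wagner--Kantor classification of $k$-homogeneous groups, $H$ is restricted to a short list: symmetric/alternating groups, subgroups of $P\Gamma L_2(\mathbb{F}_q)$, Mathieu groups, and a few exceptional examples. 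For each candidate $H$, I would enumerate the compatible ``twists'' lifting $H$ to $G$ and verify the trivial-stabilizer condition, finding that exactly the pairs in the theorem arise.

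The main obstacle is the backward direction for $k\ge 4$: two distinct families must be simultaneously ruled in and ruled out. The sharply $k$-transitive candidates (Jordan) give only $A_{k+2}$ and the Mathieu groups, accounting for the $n=k+2$ family and the pairs $(11,4),(12,5)$. The ``twisted'' candidates --- with $\ker\pi_1$ nontrivial, so $H$ is only $k$-homogeneous and a point stabilizer acts regularly on $(k-1)$-subsets --- must yield precisely the four pairs $(9,4),(9,6),(33,4),(33,30)$ coming from $\PGL_2(\mathbb{F}_8)$ and $P\Gamma L_2(\mathbb{F}_{32})$. Establishing that no additional twisted examples exist requires a careful intersection of the $k$-homogeneous classification with the numerical constraint $|H_1|=\binom{n-1}{k-1}$, and this simultaneous case analysis is likely the most delicate step in the proof.
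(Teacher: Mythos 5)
Your overall architecture matches the paper's: Sabidussi's theorem, the projection $H=\pi_1(G)$, $k$-homogeneity of $H$, and the classifications of sharply $k$-transitive and $k$-homogeneous groups. Your forward direction is essentially complete and correct; in particular $G=A_{k+2}\times\{e\}$ for $n=k+2$ is a clean alternative to the paper's citation of earlier work, and your groups for the sporadic cases coincide with the paper's (your condition that the point stabilizer act regularly on $(k-1)$-subsets is exactly the paper's sharp $(n-k,k-1,1)$-transitivity).

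The gap is in the backward direction for $k\ge 4$, specifically in your treatment of $T=G\cap\ker\pi_1$. You reduce to a dichotomy: either $T$ is trivial (so $H$ is sharply $k$-transitive, handled by Jordan) or $T$ is nontrivial with $|H_1|=\binom{n-1}{k-1}$, which amounts to $|T|=(k-1)!$. But $T$ can be any subgroup of $\Sym_{k-1}$ whose order $t$ divides $(k-1)!$, with $|H|=P(n,k)/t$ accordingly; and for $n/2<k\le n-3$ the group $H$ is only guaranteed to be $(n-k)$-homogeneous, hence roughly $3$-transitive, which bounds $|H|$ from below far too weakly to force $t\in\{1,(k-1)!\}$. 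Concretely, for $H=M_{12}$ ($n=12$) the divisibility constraints leave open $k=8$ with $t=7\cdot6\cdot5$ and $k=9$ with $t=7\cdot6\cdot5\cdot4$, neither of which your dichotomy sees; the paper rules these out by showing that no subgroup of $\Sym_7$ (resp.\ $\Sym_8$) of the required order exists, via Dixon--Mortimer's theorem on subgroups of $\Sym_a$ of index less than $\binom{a}{r}$, which forces $T$ to contain a large alternating group whose order then fails to divide $t$. Your sketch also omits the $3$-transitive family $H=AGL(d,2)$ with $(n,k)=(2^d,2^d-3)$, which survives all the easy divisibility tests and occupies an entire section of the paper: excluding it requires taking $r=\binom{d}{2}-2$ in the Dixon--Mortimer bound together with a $2$-adic valuation computation showing $(k-r)!/2\nmid t$. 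Without these two ingredients the ``only if'' direction does not close.
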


 (b) If $S_{n,k}$ is a Cayley graph of a group $G$, then Sabidussi's Theorem asserts that we can regard $G$ as a subgroup of $\Sym_n\times\Sym_{k-1}$. The image $H$ of $G$ under the natural projection to $\Sym_n$ turns out to be $k$-homogeneous. Using a classification of $k$-homogeneous groups we can say much more on $H$, hence on $G$. This idea eventually led to the complete proof of the above theorem.
 
\medskip

There is another interesting observation that we would like to point out. Among the sporadic finite simple groups, the first known ones are the Mathieu groups  $M_{11}, M_{12}, M_{22}, M_{23}, M_{24}$. In the paper we show that the $(n,k)$-star graph $S_{11,4}$ (resp. $S_{12,5}$) is a Cayley graph of the Mathieu group $M_{11}$ (resp. $M_{12}$). Moreover, computation shows that all groups of automorphisms acting regularly on the vertices of $S_{11,4}$ (resp.~$S_{12,5}$) are isomorphic to  $M_{11}$ (resp.~$M_{12}$). So it is interesting to ask the following:

\begin{question}
For a given pair $(G,k)$ where $G$ is a finite group and $k$ a positive integer, does there exist a connected $k$-regular graph $S$ such that $G$ is, up to isomorphism, the only group with Cayley graph $S$?
\end{question}

In particular, the answer is affirmative for $(M_{11},10)$ and $(M_{12},11)$. Some special cases are easy (for example, when $|G|=p$ is prime, then $G$ has to be $\mathbb{Z}/p\mathbb{Z}$ and the answer is affirmative if $k$ is even and $2\le k\le p-1$;  when $k=2$, then the answer is affirmative if $G=\mathbb{Z}/n\mathbb{Z}$ with odd $n$),  but in general it seems difficult. A related conjecture is given in \cite[Remarks on Theorem 1.3]{FLWX} which claims that every finite nonabelian simple group $G$ has a GRR of valency 3. This conjecture immediately implies an affirmative answer for $(G,3)$ for every finite nonabelian simple $G$.

In a previous version of the paper, we asked the above question only for finite simple groups $G$ and without restriction on the valence. That question can be easily answered affirmatively using the graphical regular representation (GRR), as pointed out by a referee. Here is the explanation. By definition, a Cayley graph $\Gamma=\Gamma(G,S)$ is called a GRR of $G$ if $Aut(\Gamma)\cong G$. If $\Gamma$ is a GRR of $G$, then the above question has an affirmative answer for $G$. On the other hand, the question of which groups admit GRR was answered completely by Hetzel (1976) and Godsil (1981):  all finite unsolvable group have GRR, and the only finite solvable groups without GRR are abelian groups of exponent greater than 2, generalized dicyclic groups, and 13 exceptional groups \cite[16g]{Biggs}. In particular, all finite simple groups except $\mathbb{Z}/p\mathbb{Z}$ have GRR. Moreover, the case $\mathbb{Z}/p\mathbb{Z}$ is easy as seen above.

\medskip

The paper is organized as follows. In \S2 we determine the automorphism group of $S_{n,k}$, $\Aut(S_{n,k})$. In \S3 we discuss $k$-homogeneous and $k$-transitive groups, in particular we list their classifications. In \S4 and \S5 we prove the main theorem, Theorem \ref{main theorem}.

\medskip

In an earlier attempt at solving this problem, our method in the original manuscript relied heavily on  the classification of 2-transitive and 3/2-transitive groups (which is based on the 15000-page Classification of Finite Simple Groups), wherein we proved that $\Aut(S_{n,k})$ was isomorphic to a semidirect product of permutation groups. It was pointed out to us by a reader (whom we are grateful to) that  by using  $k$-homogeneousness we get a much simpler approach to attack the problem, and that the semidirect product can be replaced by a direct product. These observations have allowed us to only use the Classification of Finite Simple Groups in parts of Lemmas 3.2-3.5, and helped us to complete the project of determining the Cayleyness of all $(n,k)$-star graphs.

\section{The automorphism group $\Aut(S_{n,k})$}
In this section, we determine $\Aut(S_{n,k})$ in order to apply Sabidussi's Theorem. 

Let $\Sym_n$ be the symmetric group on the set $\{1,\dots,n\}$. Let $\Sym_{k-1}\le \Sym_n$ be the subgroup of $\Sym_n$ that only permutes $\{2,3,\dots,k\}$, i.e., the subgroup that fixes $\{1,k+1,\dots,n\}$.

\subsection{The construction of the group homomorphism $\varphi: \Sym_n\times \Sym_{k-1}\to \Aut(S_{n,k})$}
\begin{definition}\label{df:phi}
Define $\varphi: \Sym_n\times \Sym_{k-1}\to \Aut(S_{n,k})$ by $\varphi(\mu,\nu)(\bar{a})=\overline{\mu a\nu^{-1}}$ for any $a\in\Sym_n$, that is, $\varphi(\mu,\nu)\in \Aut(S_{n,k})$ is defined as follows:
$$\quad
\varphi(\mu,\nu): \; [a_1,\dots,a_k]\mapsto [\mu(a_{\nu^{-1}(1)}), \dots, \mu(a_{\nu^{-1}(k)})].
$$

\end{definition}
The lemma below asserts that it is actually a group homomorphism. 

\begin{lemma}\label{lemma:SnXS{k-1} injective}
 The map $\varphi$ is an injective group homomorphism. As a consequence, $S_{n,k}$ is vertex-transitive (which is proved in \cite[Theorem 3]{CC}). 
\end{lemma}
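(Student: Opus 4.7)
The plan is to verify three assertions---that each $\varphi(\mu,\nu)$ is a graph automorphism, that $\varphi$ respects multiplication, and that $\ker\varphi$ is trivial---and then deduce vertex-transitivity at once from the fact that $\varphi(a,\mathbf{e})$ sends $\overline{\mathbf{e}}$ to $\bar{a}$.

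For the first assertion, the crucial point is that $\nu\in\Sym_{k-1}$ fixes $1$ and permutes $\{2,\dots,k\}$, hence $\nu^{-1}$ preserves $\{1,\dots,k\}$ setwise. It follows that the displayed formula depends only on $\bar{a}$ and not on the chosen lift $a\in\Sym_n$, and that it defines a bijection of $V(S_{n,k})$ with inverse $\varphi(\mu^{-1},\nu^{-1})$. To check edge preservation I would rewrite the action on lifts as $c\mapsto \mu c\nu^{-1}$: a residual edge, corresponding to a pair $c,c'$ with $c(j)=c'(j)$ for $2\le j\le k$ and $c(1)\ne c'(1)$, remains residual because $\nu^{-1}(1)=1$ and $\mu$ is a bijection; and a star edge, given by $c'=c\cdot(1,i)$ with $2\le i\le k$, maps to the pair $\mu c\nu^{-1}$ and $(\mu c\nu^{-1})\cdot\nu(1,i)\nu^{-1}=(\mu c\nu^{-1})\cdot(1,\nu(i))$, which is again a star edge because $\nu(i)\in\{2,\dots,k\}$. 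The homomorphism property is then a one-line calculation: $\varphi(\mu_1,\nu_1)\circ\varphi(\mu_2,\nu_2)(\bar{a})=\overline{\mu_1\mu_2\,a\,\nu_2^{-1}\nu_1^{-1}}=\varphi(\mu_1\mu_2,\,\nu_1\nu_2)(\bar{a})$.

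The main (mild) obstacle is the triviality of $\ker\varphi$. Assume $\varphi(\mu,\nu)=\mathrm{id}$. Via the substitution $j=\nu^{-1}(i)$, the fixed-point equation $(\mu a\nu^{-1})(i)=a(i)$ rearranges into $\mu(a(j))=a(\nu(j))$ for every $a\in\Sym_n$ and every $j\in\{1,\dots,k\}$. Setting $j=1$ and using $\nu(1)=1$ yields $\mu(a(1))=a(1)$; as $a$ varies, the value $a(1)$ ranges over all of $\{1,\dots,n\}$, forcing $\mu=\mathbf{e}$. The remaining equation $a(j)=a(\nu(j))$ combined with the injectivity of $a$ gives $\nu(j)=j$ on $\{1,\dots,k\}$, and together with the fact that $\nu$ already fixes $\{k+1,\dots,n\}$ this yields $\nu=\mathbf{e}$. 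The only risk of error throughout is keeping track of which side of the product each permutation acts on, and remembering that $\nu$ acts on positions inside the $k$-window while $\mu$ acts on the values.
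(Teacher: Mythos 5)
Your proposal is correct and follows essentially the same route as the paper: well-definedness via $\nu$ preserving $\{1,\dots,k\}$, edge-type preservation, the one-line homomorphism identity, triviality of the kernel by first extracting $\mu=\mathbf{e}$ from position $1$ and then $\nu=\mathbf{e}$ from injectivity of $a$, and transitivity via $\varphi(a,\mathbf{e})$. The only (cosmetic) differences are that you verify bijectivity by exhibiting the inverse $\varphi(\mu^{-1},\nu^{-1})$ and handle star edges by conjugating the transposition, $\nu(1,i)\nu^{-1}=(1,\nu(i))$, where the paper argues coordinate-by-coordinate.
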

\begin{proof}
(i) Observe that $\varphi(u,v)(\bar{a})$ does not depend on the choice of $a$, that is, $\mu(a_{\nu^{-1}(1)})$, $\dots$, $\mu(a_{\nu^{-1}(k)})$ are determined by $a_1,\dots,a_k$ (thus $a_{k+1},\dots,a_n$ are irrelavant). This is true because $\nu$ maps the set $\{1,\dots,k\}$ to itself.
\smallskip

(ii) For any pair $(\mu,\nu)\in \Sym_n\times\Sym_{k-1}$, we claim that  $\varphi(\mu,\nu)$ is indeed in $\Aut(S_{n,k})$. 

To show that $\varphi(\mu,\nu)$ sends different vertices to different vertices, we assume $\varphi(\mu,\nu)(\bar{a})=\varphi(\mu,\nu)(\bar{b})$ for $a, b\in\Sym_n$. Thus
$\mu(a_{\nu^{-1}(i)})=\mu(b_{\nu^{-1}(i)})$ for $1\le i\le k$. Since the set $\{\nu^{-1}(i) | 1\le i\le k\}=\{1,\dots,k\}$, we have $\mu(a_j)=\mu(b_j)$ for $1\le j\le k$, thus $a_j=b_j$ for $1\le j\le k$ because $\mu$ is an isomorphism. Therefore $\bar{a}=\bar{b}$.

To show that $\varphi(\mu,\nu)$ sends adjacent vertices to adjacent vertices, we assume $\bar{a}$ and $\bar{b}$ are adjacent, and denote $a'=\mu a \nu^{-1}$, $b'=\mu b\nu^{-1}$. If $\bar{a}$ and $\bar{b}$ are joint by a star edge, say $a_1=b_j$, $a_j=b_1$, and $a_i=b_i$ ($i\neq 1, j$), then 
$a'_1=\mu(a_{\nu^{-1}(1)})=\mu(a_{1})=\mu(b_j)=\mu(b_{\nu^{-1}(\nu(j))})=b'_{\nu(j)}$, 
$b'_1=\mu(b_{\nu^{-1}(1)})=\mu(b_{1})=\mu(a_j)=\mu(a_{\nu^{-1}(\nu(j))})=a'_{\nu(j)}$, 
$a'_i=\mu(a_{\nu^{-1}(i)})=\mu(b_{\nu^{-1}(i)})=b'_{i}$ ($i\neq 1,j$). So $\bar{a'}$ and $\bar{a'}$ are joint by a star edge.  A similar argument works for a residual edge. Therefore $\varphi$ is well-defined. 
\smallskip

(iii) Next, we show that $\varphi$ is a group homomorphism. Indeed, for any $(\mu,\nu), (\mu',\nu')\in \Sym_n\times\Sym_{k-1}$, 
 let $\overline{a}$ be any vertex of $S_{n,k}$ and let $a\in \Sym_n$ be any representative of $\overline{a}$. We have
$$
\varphi(\mu,\nu)\varphi(\mu',\nu')(\overline{a})
=\varphi(\mu,\nu)(\overline{\mu'a\nu'^{-1}})
=\overline{\mu\mu'a\nu'^{-1}\nu^{-1}}
=\overline{\mu\mu'a(\nu\nu')^{-1}}
=\varphi(\mu\mu',\nu\nu')(\bar{a}).
$$
Therefore $\varphi(\mu,\nu)\varphi(\mu',\nu')=\varphi((\mu,\nu)\cdot(\mu',\nu'))$. 

\smallskip

(iv) Then, we show that $\varphi$ is injective. Assume that $(\mu,\nu)\in \Sym_n\times \Sym_{k-1}$ satisfies $\varphi(\mu,\nu)={\rm id}_{\Aut(S_{n,k})}$ (the identity automorphism of $S_{n,k}$). That is, for any vertex $[a_1,\dots,a_k]$ of $S_{n,k}$, 
$$[\mu(a_{1}), \dots,\mu(a_{\nu^{-1}(k-1)}),\mu(a_{\nu^{-1}(k)})]=[a_{1},\dots,a_k].$$
(Note that $\nu^{-1}(1)=1$.)
The equality of the last coordinate $\mu(a_{1})=a_{1}$ holds for any $a_1=1,\dots,n$, so $\mu={\rm id}_{\Sym_n}$. Then $a_{\nu^{-1}(i)}=a_i$, thus $\nu^{-1}(i)=i$ for $2\le i\le k$; that is, $\nu={\rm id}_{\Sym_{k-1}}$. Thus $\varphi$ is injective.
\smallskip

(v) Finally, we show the vertex-transitivity of $S_{n,k}$. Indeed, since every vertex of $S_{n,k}$ is of the form $\bar{a}$ for some (non-unique) $a\in\Sym_n$, we can choose $\mu=a$ and $\nu=\mathbf{e}$ (the identity). Then $\varphi(\mu,\nu)(\bar{\mathbf{e}})=\overline{a\mathbf{e}\mathbf{e}}=\overline{a}$. 
\end{proof}

\subsection{Determining $\Aut(S_{n,k})$}
For a vertex $v$ of $S_{n,k}$, we say $u$ is a residual-adjacent (resp. star-adjacent) neighbor of $v$ if $u$ is connected to $v$ by a residual (resp. star) edge. 

\begin{lemma}\label{lem:transposition} 
Assume $2\le a,b,c,d,e,f\le n$ and $a\neq b$, $b\neq c$, $c\neq d$, $d\neq e$, $e\neq f$, $f\neq a$, such that the following equality of permutations holds: 
$$(1,f)(1,e)(1,d)(1,c)(1,b)(1,a)={\rm id}.$$ 
Then $a=c=e$, $b=d=f$.  
\end{lemma}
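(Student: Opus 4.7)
The plan is to evaluate the product $P=(1,f)(1,e)(1,d)(1,c)(1,b)(1,a)$ at the single element $1$ and carry out a short case analysis. Because each transposition $(1,x)$ either fixes a point $y\neq 1,x$ or swaps $y$ with $1$, tracing the orbit of $1$ through the six factors is quite restrictive, and the constraints $2\le a,\dots,f\le n$ together with the six consecutive-inequality hypotheses leave very little room.

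Applying the factors from right to left gives $1\mapsto a\mapsto a$ (using $b\neq a$). I would then split into cases $c=a$ and $c\neq a$, and inside each case branch further on whether $d,e,f$ agree with the currently visited symbol. Every branch with $c\neq a$ will force a forbidden equality such as $e=1$, $f=1$, or $a=f$ (the first two violating the range $\ge 2$, the last violating the hypothesis $f\neq a$), so only $c=a$ survives. Continuing the orbit in that branch, the orbit becomes $1\mapsto a\mapsto a\mapsto 1\mapsto d\mapsto d\mapsto ?$, and the requirement $P(1)=1$ further forces $f=d$.

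Once $c=a$ and $f=d$ are established, I would simplify $P$ using the elementary identity $(1,x)(1,y)(1,x)=(x,y)$, valid for distinct $x,y\in\{2,\dots,n\}$. Grouping $P$ into two triples gives
\[
P=\bigl[(1,d)(1,e)(1,d)\bigr]\bigl[(1,a)(1,b)(1,a)\bigr]=(d,e)(a,b).
\]
The hypothesis $P={\rm id}$ then yields $(d,e)=(a,b)$, i.e.\ $\{a,b\}=\{d,e\}$. Since the hypothesis $c\neq d$ combined with $c=a$ gives $a\neq d$, the only possibility is $d=b$ and $e=a$; together with $c=a$ and $f=d$ this is exactly the claim $a=c=e$ and $b=d=f$.

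The only real obstacle is organizing the case split in the first step without dropping a branch, but the interaction of the range restriction with the consecutive-inequality hypotheses kills each unwanted case almost immediately, so the bookkeeping stays short.
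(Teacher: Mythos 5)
Your proof is correct. It differs from the paper's only in how the first reduction is obtained: the paper rules out the case ``$a\neq c$ and $d\neq f$'' by observing that a product $(1,i)(1,j)(1,l)$ of transpositions with $i,j,l$ distinct is the $4$-cycle $(1,l,j,i)$, so that the hypothesis would force the two $4$-cycles $(1,d,e,f)$ and $(1,c,b,a)$ to coincide and hence $c=d$, a contradiction; it then treats $a=c$ (and symmetrically $d=f$) using the order-$2$ identity. You instead trace the image of the single point $1$ through the six factors, which kills every branch with $c\neq a$ against the range restriction or one of the six inequalities, and then forces $f=d$ in the surviving branch. From that point on the two arguments are identical: both collapse the product to $(d,e)(a,b)$ via $(1,x)(1,y)(1,x)=(x,y)$ and resolve the dichotomy $\{d,e\}=\{a,b\}$ using $c\neq d$. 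Your orbit-tracing step is arguably more mechanical and avoids invoking the order-$4$ computation, at the cost of a slightly longer case split; the paper's cycle-structure comparison is more conceptual but needs the extra observation about $4$-cycles. Both are complete and elementary; your branch enumeration under $c\neq a$ is exhaustive (the branch $d=a$, $f=e$ dies by the hypothesis $e\neq f$ rather than by the orbit of $1$ alone, which your ``such as'' phrasing glosses over but does not invalidate).
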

\begin{proof}
First, observe two simple computations:

-- for three distinct numbers $i,j,l$, the product $(1,i)(1,j)(1,l)=(1,l,j,i)$ has order 4;

-- for two distinct numbers $i,j$, the product $(1,i)(1,j)(1,i)=(i,j)$ has order 2. 

Next, we prove the lemma by cases:

If $a\neq c$ and $d\neq f$, then $a,b,c$ (resp. $d,e,f$) are three distinct numbers, and thus $(1,d,e,f)=(1,f)(1,e)(1,d)=(1,a)(1,b)(1,c)=(1,c,b,a)$, which implies $d=c$ (as well as $e=b$, $f=a$), a contradiction to our assumption.

If $a=c$, then $(1,f)(1,e)(1,d)=(a,b)$ has order 2, so $d=f$, $(d,e)=(1,d)(1,e)(1,d)=(1,f)(1,e)(1,d)=(a,b)$, which implies either ``$a=d$ and $b=e$'' or ``$a=e$ and $b=d$''. The former is impossible since it implies a contradiction $c=d$. So the latter holds, i.e., $a=c=e$, $b=d=f$. 

If $d=f$, then the argument is similar to the $a=c$ case.
\end{proof}

We define an \emph{alternating 6-cycle} to be a 6-cycle with alternative residual and star edges. We define a \emph{star-edge 6-cycle} to be a 6-cycle consisting solely of star edges.

\begin{lemma}\label{lemma:unique 6-cycle}
Let $u$, $v$, $w$ be three vertices in $S_{n,k}$.

(i) If $uv$ is a residual edge and $vw$ is a star edge, then there is a unique alternating 6-cycle containing $uv$ and $vw$.

(ii) If $uv$ and $vw$ are both star edges, then there is a unique star-edge 6-cycle containing $uv$ and $vw$. 
\end{lemma}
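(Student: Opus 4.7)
For part (i), the plan is to parametrize the candidate alternating 6-cycle edge by edge and show the parameters are uniquely forced. Write $v=[v_1,\dots,v_k]$. Since $uv$ is residual, $u=[u_1,v_2,\dots,v_k]$ with $u_1\notin\{v_1,\dots,v_k\}$; since $vw$ is star, $w$ comes from swapping positions $1$ and $i$ of $v$ for some $i\in\{2,\dots,k\}$. The remaining edges $wx$ (residual, new entry $x_1$), $xy$ (star, swapping positions $1$ and $j$), $yz$ (residual), $zu$ (star, swapping positions $1$ and $l$) introduce three free parameters $x_1,j,l$ (the last residual is then forced). Working backward from $u$ one finds $z=[v_l,v_2,\dots,v_{l-1},u_1,v_{l+1},\dots,v_k]$, hence $y_m=z_m$ for $m\ne 1$; working forward from $x$ one has another formula for $y$ in terms of $j$. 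Matching the two descriptions of $y$ position by position is the heart of the argument.

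The case analysis splits on whether $j=i$. If $j\neq i$, position $i$ of $y$ equals $v_1$, forcing either $v_1=v_i$ (impossible) or $v_1=u_1$ (impossible since $u_1\notin\{v_1,\dots,v_k\}$). If $j=i$, position $i$ of $y$ equals $x_1$, which must coincide with either $v_i$ (ruled out because $x_1$ is a ``fresh'' residual value) or with $u_1$, forcing $l=i$ and $x_1=u_1$. Thus the parameters are uniquely $j=l=i$, $x_1=u_1$, giving the explicit cycle
\[
[u_1,v_2,\dots,v_k],\ [v_1,\dots,v_k],\ [v_i,v_2,\dots,v_{i-1},v_1,v_{i+1},\dots,v_k],
\]
\[
[u_1,v_2,\dots,v_{i-1},v_1,v_{i+1},\dots,v_k],\ [v_1,\dots,v_{i-1},u_1,v_{i+1},\dots,v_k],\ [v_i,v_2,\dots,v_{i-1},u_1,v_{i+1},\dots,v_k],
\]
whose six vertices are manifestly distinct because $u_1\notin\{v_1,\dots,v_k\}$. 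A direct check that each consecutive pair differs by the advertised edge type confirms both existence and uniqueness.

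For part (ii), the plan is to pass to $\Sym_n$ representatives and invoke Lemma \ref{lem:transposition}. A star edge corresponds to right multiplication of a representative by a transposition $(1,m)$ with $2\le m\le k$, so a star-edge 6-cycle with consecutive edges labeled $i_1,i_2,i_3,i_4,i_5,i_6\in\{2,\dots,k\}$ is equivalent to the relation
\[
(1,i_1)(1,i_2)(1,i_3)(1,i_4)(1,i_5)(1,i_6)={\rm id},
\]
together with $i_m\ne i_{m+1}$ (consecutive vertices distinct) and $i_6\ne i_1$ (the cycle does not immediately backtrack at $u$). Lemma \ref{lem:transposition} then forces $i_1=i_3=i_5$ and $i_2=i_4=i_6$. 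Since the first two labels $i_1,i_2$ are determined by the given edges $uv,vw$ (and $i_1\ne i_2$ because $u\ne w$), the entire cycle is determined. Existence is immediate from the same formula: the six vertices are the six rearrangements of the entries in positions $1,i_1,i_2$ of a representative of $u$, which are pairwise distinct since those three entries of $u$ are distinct.

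The only technical obstacle is the bookkeeping in part (i); Lemma \ref{lem:transposition} does the heavy lifting in part (ii). I expect the case analysis for $j\ne i$ versus $j=i$, together with keeping track of which residual values are forbidden (the ``fresh value'' constraint for a residual edge), to be the delicate point.
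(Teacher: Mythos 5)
Your proof is correct and takes essentially the same approach as the paper: for (i) a direct forcing argument showing the parameters of the remaining three vertices are uniquely determined, and for (ii) a translation of star-edge 6-cycles into the relation $(1,i_1)\cdots(1,i_6)={\rm id}$ followed by an appeal to Lemma \ref{lem:transposition}. The only difference is presentational — the paper verifies the uniqueness in (i) for a normalized special case and asserts the general case is analogous, whereas you carry out the general coordinate bookkeeping explicitly.
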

\begin{proof}

(i) Denote $v=[a_1,\dots,a_k]$. Assume $u$ is obtained from $v$ by replacing $a_1=i$ by $l$, and $w$ is obtained from $v$ by swapping the first number $a_1$ with the $r$-th number $a_r=j$. Then there is a 6-cycle connecting $v$ and five vertices obtained from $v$ by replacing $(i,j)=(a_1, a_r)$ by $(j,i)$, $(l,i)$, $(i,l)$, $(j,l)$, $(l,j)$, respectively.  

Next, we show that such a 6-cycle is unique. Assume $u\residualedge  v\staredge w\residualedge x\staredge y\residualedge z\staredge u$ is such a cycle (``$\residualedge$'' denotes a residual edge, ``$\staredge$'' denotes a star edge). For simplicity we only prove the special case $v=\overline{\mathbf{e}}=[1,\dots,k]$, $u=[k+1,2,3,\dots,k]$, $w=[2,1,3,\dots,k]$ (the general case is proven in the same way with much more cumbersome notation). Then $x=[p, 1,3,\dots,k]$ for some $k+1\le p\le n$,  $y$ is a permutation of the set $A=\{p, 1,3,\dots,k\}$ (because $x,y$ are star-adjacent), $z$ is a permutation of the set $B=\{k+1,2,\dots,k\}$ (because $u,z$ are star-adjacent). For $yz$ to be a residual edge, the sets $A$ and $B$ must differ by only one number. Therefore $p=k+1$, $x=[k+1,1,3,\dots,k]$, $y=[1,k+1,3,\dots,k]$, $z=[2,k+1,3,\dots,k]$. So the 6-cycle is unique.

(ii) Denote by $s_j$ ($1\le j\le k-1$) the action on $k$-permutations by swapping $a_1$ with $a_{j+1}$. Assume $u=s_j(v)$ and $w=s_k(v)$. Then the 6-cycle 
$$u\staredge \equalto{s_j(u)}{v}\staredge \equalto{s_ks_j(u)}{w}\staredge s_js_ks_j(u)\staredge s_ks_js_ks_j(u)\staredge s_js_ks_js_ks_j(u)\staredge \equalto{(s_ks_j)^3(u)}{u}$$ 
 satisfies the requirement. 
 
Next we check that there is only one such  6-cycle. Equivalently, if $s_as_bs_cs_ds_es_f(u)=u$, then $a=c=e$, $b=d=f$. This follows from Lemma \ref{lem:transposition}.
\end{proof}

For a vertex $x$ in a graph $\Gamma$, we denote by ${\rm Stab}_x$ the subgroup of $\Aut(\Gamma)$ consisting of all automorphisms that fix $x$. We recall the following orbit stabilizer equality \cite[Theorem 1.4A]{DM}: if $\Gamma$ is a vertex-transitive graph with $m$ vertices and $x$ is any vertex, then
\begin{equation}\label{eq:AugGamma}
|\Aut(\Gamma)|=m|{\rm Stab}_{x}|.
\end{equation}

We need the following lemma.

\begin{lemma}\label{lemma: star to star} 
Let $k\ge 2$ and $n\ge k+2$. An edge of $S_{n,k}$ is a residual edge if and only if it is in a 3-cycle.
As a consequence, an automorphism of $S_{n,k}$ sends a residual edge (resp.
~ star edge) to a residual edge (resp.~ star edge).
\end{lemma}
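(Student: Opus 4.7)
The plan is to prove the biconditional by establishing the two directions directly and then deriving the consequence from the fact that being contained in a 3-cycle is a graph-theoretic invariant preserved by automorphisms. The ``only if'' direction (every residual edge lies in a 3-cycle) is a short counting argument: given a residual edge between $u=[a_1,a_2,\dots,a_k]$ and $v=[b_1,a_2,\dots,a_k]$, the hypothesis $n\ge k+2$ leaves at least one value $c_1\in\{1,\dots,n\}\setminus\{a_1,b_1,a_2,\dots,a_k\}$, and the vertex $w=[c_1,a_2,\dots,a_k]$ is residual-adjacent to both $u$ and $v$, yielding the desired triangle (in fact an all-residual one).

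For the ``if'' direction, I would fix an arbitrary star edge $uv$, say with $v=[a_1,\dots,a_k]$ and $u$ obtained from $v$ by swapping positions $1$ and $i$ (so $u=[a_i,a_2,\dots,a_{i-1},a_1,a_{i+1},\dots,a_k]$), and assume toward contradiction that there is a common neighbor $w\neq u,v$. I would split into three cases according to the types of $vw$ and $uw$. If both are residual, then $w$ agrees with both $u$ and $v$ in positions $2,\dots,k$, which forces $u$ and $v$ to agree on those positions and contradicts the fact that position $i$ of $u$ is $a_1$ while position $i$ of $v$ is $a_i$. If both are star edges, then $w$ is obtained from $v$ by swapping positions $1$ and some $j\neq 1,i$; comparing $w$ to $u$ at positions $1$, $i$, and $j$ shows that three coordinates differ, which is incompatible with any single residual or star edge. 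The mixed case (one residual, one star) is handled in the same spirit by reading off position $i$ of $w$ under each assumption.

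Once both directions are established, the consequence is immediate: any $\sigma\in\Aut(S_{n,k})$ sends triangles to triangles, so an edge belongs to a 3-cycle if and only if its image does; hence residual edges are mapped to residual edges and star edges to star edges. The only mildly delicate step is the case analysis in the ``if'' direction, but it is purely local (everything happens in the common neighborhood of two star-adjacent vertices) and requires no deeper input, so I expect the entire argument to be routine once the notation for the swaps is fixed.
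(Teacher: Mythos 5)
Your proposal is correct and follows essentially the same route as the paper: residual edges lie in an all-residual clique of the form $\{[i,a_2,\dots,a_k]\}$ (nonempty of size $n-k+1\ge 3$), star edges admit no common neighbor with either endpoint, and automorphisms preserve triangles. The only cosmetic difference is that in the star--star subcase the paper argues via the parity of the permutations of $\{1,\dots,k\}$, whereas you count the three differing coordinates directly; both work.
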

\begin{proof}
A residual edge with an endpoint $[a_1,a_2,\dots,a_n]$ is in a complete graph of $n-k+1$ vertices of the form $\{[i,a_2,\dots,a_k] \,|\,  i\neq a_2,\dots,a_k\}$. Thus all edges in this complete graph are residual, and therefore a residual edge is in a 3-cycle.  In contrast, a star edge $xy$ is not in a 3-cycle. Indeed, without loss of generality we assume the endpoints $x=[1,2,3,\dots,k]$ and $y=[2,1,3,\dots,k]$. Assume the contrary that $xyz$ is a  3-cycle with the third vertex $z=[a_1,\dots,a_k]$. If $xz$ is residual, then $[a_2,\dots,a_k]=[2,\dots,k]$ and $a_1\neq 1,\dots, k$; therefore $yz$ is neither residual because $[a_2,\dots,a_k]\neq [1,3\dots,k]$, nor star because $\{a_1,\dots,a_k\} \neq \{1,\dots,k\}$ as sets. If $xz$ is star, then $yz$ is also star, and $[a_1,\dots,a_k]$ is a permutation of $[1,\dots,k]$ of opposite parity with both $x=[1,2,\dots,k]$ and $y=[2,1,\dots,k]$; but this is impossible, because the permutations $x$ and $y$ are of opposite parity. This shows that a star edge is not in a 3-cycle.
\end{proof}

\begin{theorem}\label{thm:Aut is Sn}
The group homomorphism $\varphi$ in Definition \ref{df:phi} is an isomorphism:
$$\xymatrix{\Sym_n\times \Sym_{k-1}\ar[r]^{\varphi}_{\cong\;} &\Aut(S_{n,k})}$$
Moreover, for a vertex $v$ of $S_{n,k}$, let $u_1,\dots,u_{n-k}$ (resp. $w_1,\dots,w_{k-1}$) be the residual-adjacent (resp. star-adjacent) neighbors of $v$ arranged in any order.  For a vertex $v'$ of $S_{n,k}$, let $u'_1,\dots,u'_{n-k}$ (resp. $w'_1,\dots,w'_{k-1}$) be the residual-adjacent (resp. star-adjacent) neighbors of $v'$ arranged in any order. 
Then there is a unique automorphism $f\in \Aut(S_{n,k})$ sending $v$ to $v'$, $u_i$ to $u'_i$ $(1\le i\le n-k)$, $w_i$ to $w'_i$ $(1\le i\le k-1)$. 
\end{theorem}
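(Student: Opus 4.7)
The plan is to prove $\varphi$ is an isomorphism by a cardinality match. Since $\varphi$ is already known to be injective (Lemma \ref{lemma:SnXS{k-1} injective}) and $|\Sym_n \times \Sym_{k-1}| = n!(k-1)!$, it suffices to show $|\Aut(S_{n,k})| = n!(k-1)!$. Vertex-transitivity (also from that lemma) combined with the orbit--stabilizer identity \eqref{eq:AugGamma} applied at $v = \overline{\mathbf{e}}$ reduces the task to showing $|{\rm Stab}_v| = (n-k)!(k-1)!$.

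By Lemma \ref{lemma: star to star}, any $f \in {\rm Stab}_v$ preserves edge types and so separately permutes the $n-k$ residual neighbors and the $k-1$ star neighbors of $v$, defining a homomorphism $\rho: {\rm Stab}_v \to \Sym_{n-k} \times \Sym_{k-1}$. For surjectivity of $\rho$ I would route through $\varphi$: a pair $(\mu,\nu)$ lies in $\varphi^{-1}({\rm Stab}_v)$ precisely when $\mu(1)=1$ and $\mu|_{\{2,\ldots,k\}}=\nu$, in which case $\mu|_{\{k+1,\ldots,n\}}$ is free. A short computation from Definition \ref{df:phi} then shows $\varphi(\mu,\nu)$ acts on the star neighbor $w_j$ (obtained from $v$ by swapping positions $1$ and $j$) by $w_j \mapsto w_{\nu(j)}$ and on the residual neighbor $[j,2,\ldots,k]$ by $[j,2,\ldots,k]\mapsto [\mu(j),2,\ldots,k]$, realizing every element of $\Sym_{n-k}\times\Sym_{k-1}$ in $\rho({\rm Stab}_v)$.

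The main obstacle is the injectivity of $\rho$: I must show that an $f\in{\rm Stab}_v$ fixing every neighbor of $v$ must be the identity. The plan is to first establish that $f$ fixes every vertex at distance at most $2$ from $v$. For any residual neighbor $u$ and any star neighbor $w$ of $v$, Lemma \ref{lemma:unique 6-cycle}(i) supplies a unique alternating $6$-cycle through the edges $uv$ and $vw$; since $f$ fixes the three consecutive vertices $u,v,w$ of this cycle, the induced cycle automorphism is trivial and $f$ fixes its remaining three vertices, capturing every ``star neighbor of a residual neighbor'' and ``residual neighbor of a star neighbor.'' For two distinct star neighbors $w_r,w_s$ of $v$, the unique star-edge $6$-cycle through $w_rv$ and $vw_s$ given by Lemma \ref{lemma:unique 6-cycle}(ii) is likewise fixed pointwise, capturing every ``star neighbor of a star neighbor.'' Residual neighbors of residual neighbors of $v$ lie in the same residual clique as $v$ and are themselves neighbors of $v$. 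Hence $f$ fixes the ball of radius $2$ around $v$. I would then iterate: for any neighbor $u$ of $v$, every hypothesis just used was generic in the base vertex, so the same argument applied at $u$ shows $f$ fixes the radius-$2$ ball around $u$, hence the radius-$3$ ball around $v$. Induction on distance together with connectedness of $S_{n,k}$ forces $f=\mathrm{id}$.

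Combining the injectivity and surjectivity of $\rho$ yields $|{\rm Stab}_v|=(n-k)!(k-1)!$, so $|\Aut(S_{n,k})|=P(n,k)(n-k)!(k-1)!=n!(k-1)!$ and $\varphi$ is an isomorphism. The moreover statement then follows quickly: vertex-transitivity furnishes some $g\in\Aut(S_{n,k})$ with $g(v)=v'$, and by Lemma \ref{lemma: star to star}, $g$ sends residual (resp.\ star) neighbors of $v$ to residual (resp.\ star) neighbors of $v'$; comparing the resulting ordering of the $\{u'_i\}$ and $\{w'_i\}$ with the prescribed one yields an element of $\Sym_{n-k}\times\Sym_{k-1}$. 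Surjectivity of $\rho$ at $v'$ provides an $h\in{\rm Stab}_{v'}$ correcting this element, and $f:=h\circ g$ is the required automorphism; its uniqueness follows from the injectivity of $\rho$ at $v'$.
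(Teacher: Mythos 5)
Your proof is correct and follows essentially the same route as the paper: orbit--stabilizer reduces everything to the restriction homomorphism ${\rm Stab}_v \to \Sym_{n-k}\times\Sym_{k-1}$, whose injectivity is established by propagating fixed vertices through the graph via the unique alternating and star-edge $6$-cycles of Lemma \ref{lemma:unique 6-cycle}, together with connectedness. The only cosmetic difference is that you verify surjectivity of this restriction map directly through $\varphi$, whereas the paper deduces it for free from the counting (injectivity of the restriction map gives $|{\rm Stab}_v|\le (n-k)!(k-1)!$, while the injectivity of $\varphi$ already gives the reverse inequality).
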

\begin{proof}
First, we show that $\varphi$ is an isomorphism. It suffices to show the following inequality (note that  we already have ``$\ge$'' since $\varphi$ is injective by Lemma \ref{lemma:SnXS{k-1} injective}): 
$$|\Aut(S_{n,k})|\le |\Sym_n\times \Sym_{k-1}|=n!(k-1)!.$$
Let $\overline{\mathbf{e}}=[1,\dots,k]\in V(S_{n,k})$. 
Since $S_{n,k}$ is a vertex-transitive graph, \eqref{eq:AugGamma} implies
$$|\Aut(S_{n,k})|=|S_{n,k}|\; |{\rm Stab}_{\overline{\mathbf{e}}}|=\frac{n!}{(n-k)!}|{\rm Stab}_{\overline{e}}|.$$
Thus it suffices to show the following (note that we already have ``$\ge$''):
$$|{\rm Stab}_{\overline{\mathbf{e}}}|\le (n-k)!(k-1)!.$$ 
To show this inequality, note that we have a group homomorphism (which is well-defined because of Lemma \ref{lemma: star to star})
$$\pi:{\rm Stab}_{\overline{\mathbf{e}}}\to\Sym_{n-k}\times \Sym_{k-1}, \quad f\mapsto (f_1,f_2)$$
where $f_1$ is the restriction of $f$  to the set of $n-k$ residual-adjacent neighbors of $\overline{\mathbf{e}}$, and $f_2$ is the restriction of $f$ to the set of  $k-1$ star-adjacent neighbors of $\overline{\mathbf{e}}$.

Since $|\Sym_{n-k}\times \Sym_{k-1}|=(n-k)!(k-1)!$, it suffices to show that $\pi$ is injective, in other words, the following claim:

\smallskip\noindent\emph{Claim}: if $f\in \Aut(S_{n,k})$ is in the kernel of $\pi$, then $f$ is the trivial automorphism, that is, it fixes every vertex. As a consequence, $\pi$ is bijective.
 
\smallskip\noindent\emph{Proof of claim}: let $V$ be the set of vertices $v$ such that $f$ fixes $v$ and all its adjacent vertices. Then $\overline{\mathbf{e}}\in V$ since $f$ is in the kernel of $\pi$.  If $V$ consists of all vertices of $S_{n,k}$ then we are done. Otherwise assume $V$ does not contain all vertices of $S_{n,k}$. Since $S_{n,k}$ is connected, there is a vertex $u\notin V$ that is adjacent to a vertex $v\in V$. We consider in two cases:

Case 1: $uv$ is a residual edge. Then a residual-adjacent neighbor of $u$ is either $v$ or residual-adjacent to $v$, so it is fixed by $f$. So there exists a star-adjacent neighbor $w$ of $u$ not fixed by $f$. By Lemma \ref{lemma:unique 6-cycle}(i), there is a unique alternating 6-cycle containing $uv$ and $wu$, say $$w\staredge u \residualedge v \staredge x \residualedge y\staredge z\residualedge w.$$ Since $f$ fixes $u$, $v$ and $x$, $f$ must fix the  6-cycle (because of the uniqueness), thus $f$ fixes $w$, a contradiction. 
 
Case 2:  $uv$ is a star edge. Let $w$ be adjacent to $u$, we assert that $f$ fixes $w$, thus gives a contradiction. We show this in two cases. If $uw$ is a residual edge, Lemma \ref{lemma:unique 6-cycle}(i) asserts that there is a unique alternating 6-cycle containing $uv$ and $uw$, say  
$w\residualedge u\staredge v \residualedge x \staredge y \residualedge a\staredge w$. Since $f$ fixes $u, v, x$, $f$ must fix the 6-cycle, thus it fixes $w$. If $uw$ is a star edge, Lemma \ref{lemma:unique 6-cycle}(ii) asserts that there is a unique star-edge 6-cycle consisting of $uv$ and $uw$, say 
$w\staredge u\staredge v \staredge x \staredge y \staredge a\staredge w$. 
Since $f$ fixes $u,v,x$, $f$ also fixes $w$. 

This completes the proof of claim. 
\smallskip

Next we show the ``Moreover'' part. Since $S_{n,k}$ is vertex-transitive, there exists $\sigma, \tau\in \Aut(S_{n,k})$ such that $\sigma(v)=\overline{\mathbf{e}}$, $\tau(v')=\overline{\mathbf{e}}$. For any $f$ satisfying the condition,  replacing $f$ by  $\tau f\sigma^{-1}$ if necessary, we can assume that $v=v'=\overline{\mathbf{e}}$. Then the existence and uniqueness of $f$ follows from the above conclusion that $\pi$ is bijective.
\end{proof}

\begin{remark}
The map $\varphi$ is not surjective if $n=k+1$ (the case we do not consider in this paper). In this case, let $\Sym_{n-1}$ be the symmetric group on the set $\{2,3,4,\dots,n\}$, regarded as a subgroup of $\Sym_n$. Let $\Sym_n\times \Sym_{n-1}$ be defined as before. It can be shown that $\Aut(S_{n,n-1})\cong \Sym_{n}\times \Sym_{n-1}$. 
\end{remark}

\section{$k$-homogeneous and $k$-transitive groups}
Recall that a permutation group $G$ on the set $\Omega$ is {\em $k$-homogeneous} (resp. {\em $k$-transitive}) if it acts transitively on the set of $k$-combinations (resp. $k$-permutations) of $\Omega$, where the integer $k$ satisfies $1\le k\le n$ (denote $n:=|\Omega|$). 
Moreover, $G$ is {\em sharply $k$-transitive} if it acts regularly (i.e. transitively and freely) on the set of $k$-permutations; in this case, $|G|=P(n,k)$.

Fix $n$ and $k$ such that  $S_{n,k}\cong\Gamma(G,S)$ is a Cayley graph. For convenience of notation,  we regard the isomorphism $\varphi$  of Theorem \ref{thm:Aut is Sn} as an identity of $\Aut(S_{n,k})$ with the internal direct product:
$$\Sym_n\times \Sym_{k-1}=\Aut(S_{n,k})$$
(so we view $\Sym_n$ and $\Sym_{k-1}$ as subgroups of $\Aut(S_{n,k})$). By Theorem \ref{Sabidussi},  we can identify $G$ with a subgroup of $\Sym_n\times \Sym_{k-1}$.  We define $H$ to be the image $\pi_1(G)$ under the natural projection $\pi_1:\mathfrak{S}_n\times\mathfrak{S}_{k-1}\rightarrow \mathfrak{S}_n$, that is,
$$H=\{\mu\in \mathfrak{S}_n\::\:(\mu,\nu)\in G \text{ for some }\nu\in \mathfrak{S}_{k-1}\}.$$
Define $T=G\cap {\rm ker}(\pi_1)$. Since $T$ can be viewed as a subgroup of $\Sym_{k-1}$, we have that $|T|$ divides $(k-1)!$. By the first isomorphism theorem, we have $H\cong G/T$, thus $|H|=|G|/|T|$.

\begin{lemma}\label{H is k-homogeneous}
$H$ is $k$-homogeneous.
\end{lemma}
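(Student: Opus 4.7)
The plan is to unwind the definition of $\varepsilon$ given in Corollary~\ref{Sabidussi corollary}(ii) and translate its surjectivity into transitivity of $H$ on the $k$-subsets of $\{1,\dots,n\}$.

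First, I would record the crucial observation that every element $\nu \in \Sym_{k-1}$ stabilizes $\{1,\dots,k\}$ as a set: by definition $\nu$ permutes $\{2,\dots,k\}$ and fixes $1$ together with everything above $k$. Consequently $\nu^{-1}$ also stabilizes $\{1,\dots,k\}$, so for any $\mu \in \Sym_n$,
$$
\{\mu\nu^{-1}(1),\dots,\mu\nu^{-1}(k)\} \;=\; \mu\bigl(\{\nu^{-1}(1),\dots,\nu^{-1}(k)\}\bigr) \;=\; \mu(\{1,\dots,k\}).
$$
In words, the underlying unordered $k$-set of the vertex $\overline{\mu\nu^{-1}}$ is $\mu(\{1,\dots,k\})$, independent of the choice of $\nu$.

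Next, I would invoke Corollary~\ref{Sabidussi corollary}(ii): the map $\varepsilon: G \to V(S_{n,k})$ defined by $(\mu,\nu)\mapsto \overline{\mu\nu^{-1}}$ is bijective, hence surjective. Thus for every $k$-subset $A \subseteq \{1,\dots,n\}$, pick any vertex of $S_{n,k}$ whose underlying set is $A$; by surjectivity this vertex equals $\overline{\mu\nu^{-1}}$ for some $(\mu,\nu) \in G$, and by the observation above $\mu(\{1,\dots,k\}) = A$ with $\mu \in H$.

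Finally, given two $k$-subsets $A, B \subseteq \{1,\dots,n\}$, choose $\mu_A, \mu_B \in H$ with $\mu_A(\{1,\dots,k\}) = A$ and $\mu_B(\{1,\dots,k\}) = B$; then $\mu_B \mu_A^{-1} \in H$ sends $A$ to $B$. Hence $H$ acts transitively on $k$-subsets, i.e., $H$ is $k$-homogeneous. There is no real obstacle: the whole argument rests on the single fact that $\Sym_{k-1}$ preserves $\{1,\dots,k\}$ setwise, so passing from a vertex to its underlying $k$-set collapses the $\nu$-coordinate and reduces the surjectivity of $\varepsilon$ to transitivity of $H$ on $k$-subsets.
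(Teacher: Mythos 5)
Your proposal is correct and follows essentially the same route as the paper: both arguments rest on the observation that $\nu\in\Sym_{k-1}$ stabilizes $\{1,\dots,k\}$ setwise, so the underlying $k$-set of $\overline{\mu\nu^{-1}}$ is $\mu(\{1,\dots,k\})$, and then use the transitivity of $G$ on vertices (via Corollary~\ref{Sabidussi corollary}) to conclude that every $k$-set lies in the $H$-orbit of $\{1,\dots,k\}$. The only cosmetic difference is that you spell out the final reduction (composing $\mu_B\mu_A^{-1}$ to move between two arbitrary $k$-sets), which the paper leaves implicit in its opening ``it suffices to prove'' sentence.
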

\begin{proof}
It suffices to prove that for any $k$-combination $\{a_1,\dots,a_k\}\subseteq \{1,\dots,n\}$, there is a permutation $\mu\in H$ such that $\{\mu(1),\dots,\mu(k)\}=\{a_1,\dots,a_k\}$. 

Since $G$ acts regularly on $S_{n,k}$, there exists $(\mu,\nu)\in\Sym_n\times \Sym_{k-1}$ that sends the vertex $\overline{\mathbf{e}}=[1,\dots,k]$ to the vertex $[a_1,\dots,a_k]$. That is,
$$[\mu(\nu^{-1}(1)),\dots,\mu(\nu^{-1}(k))]=[a_1,\dots,a_k].$$
 On the other hand, $\nu$ permutes $2,\dots, k$ and fixes $1$, so $\{\nu^{-1}(1),\dots,\nu^{-1}(k)\}=\{1,\dots,k\}$, thus
$\{\mu(1),\dots,\mu(k)\}=\{a_1,\dots,a_k\}.$
\end{proof}

In the rest of paper, $p$ always denotes a prime number, and $q$ denotes a prime power (that is, $q=p^m$ for some prime $p$ and positive integer $m$).

\begin{lemma}\cite{LW}\label{lemma:LW}
Let $G$ be $k$-homogeneous on a set of $n$ points, where $2\le k\le n/2$. Then 

(i) $G$ is $(k-1)$-homogeneous.

(ii) $G$ is $(k-1)$-transitive.

(iii) if $k\ge 5$, then $G$ is $k$-transitive.
\end{lemma}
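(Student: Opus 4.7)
The plan is to follow the classical representation-theoretic approach of Livingstone and Wagner. Throughout, write $\chi^\lambda$ for the irreducible character of $\Sym_n$ labelled by a partition $\lambda\vdash n$, and for $G\le \Sym_n$ let $a_j(G)$ (resp.\ $b_j(G)$) denote the number of $G$-orbits on $j$-subsets (resp.\ ordered $j$-tuples of distinct points) of $\{1,\dots,n\}$; thus $G$ is $j$-homogeneous iff $a_j(G)=1$, and $j$-transitive iff $b_j(G)=1$.

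For (i), the key tool is Young's rule: for $j\le n/2$ the permutation character of $\Sym_n$ on $j$-subsets decomposes multiplicity-freely as
$$\pi_j \;=\; \chi^{(n)} + \chi^{(n-1,1)} + \chi^{(n-2,2)} + \cdots + \chi^{(n-j,j)},$$
because the only partitions of $n$ dominating $(n-j,j)$ are the two-row shapes $(n-i,i)$ for $0\le i\le j$, each with Kostka number~$1$. Restricting to $G$ and pairing with $1_G$ yields
$$a_j(G) \;=\; \sum_{i=0}^{j}\bigl\langle\chi^{(n-i,i)}\big|_G,\,1_G\bigr\rangle,$$
with each summand a nonnegative integer and the $i=0$ term equal to $1$. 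Hence $a_k(G)=1$ forces every $\langle\chi^{(n-i,i)}|_G,1_G\rangle$ with $1\le i\le k$ to vanish, and in particular $a_{k-1}(G)=1$.

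For (ii), given (i) it suffices to show that the setwise stabilizer $G_{\{\Delta\}}$ of a fixed $(k-1)$-subset $\Delta$ induces the full symmetric group $\Sym_\Delta$ on $\Delta$. I would decompose the permutation character on ordered $(k-1)$-tuples, $\tau_{k-1}=\mathrm{Ind}_{\Sym_{n-k+1}}^{\Sym_n} 1$, via the branching rule into irreducibles $\chi^{(n-i,\mu)}$ with $\mu\vdash i$ and $i\le k-1$, then cascade the vanishing of the two-row multiplicities (from (i)) to the non-two-row constituents by an inductive incidence argument between $(k-1)$-tuples and their $(n-k+1)$ extensions to $k$-subsets, using $k$-homogeneity at the top. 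The main obstacle here is the careful bookkeeping of hook- and near-hook-shaped multiplicities, which is the technical heart of the Livingstone--Wagner argument.

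For (iii), the extra hypothesis $k\ge 5$ upgrades (ii) from unordered $k$-subsets to ordered $k$-tuples. Given $(k-1)$-transitivity from (ii), one needs the stabilizer of a $k$-subset $\Delta'$ to induce all of $\Sym_{\Delta'}$. The plan is to rule out $G$-invariants in the non-two-row constituents of $\tau_k$ (notably the hook character $\chi^{(n-k+1,1^{k-1})}$) by combining character-degree bounds with the divisibility of $|G|$ by $P(n,k-1)$ forced by $(k-1)$-transitivity. I expect this to be the main obstacle; the condition $k\ge 5$ is exactly what is needed to exclude a handful of small sporadic $k$-homogeneous but not $k$-transitive groups with $k\in\{2,3,4\}$.
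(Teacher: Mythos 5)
The paper does not actually prove this lemma --- it is quoted from Livingstone and Wagner \cite{LW} --- so the relevant comparison is with their original argument. Your part (i) is correct and complete, and it is precisely the Livingstone--Wagner argument: for $j\le n/2$ the permutation character on $j$-subsets is the multiplicity-free sum $\pi_j=\sum_{i=0}^{j}\chi^{(n-i,i)}$, hence $a_j(G)=\sum_{i=0}^{j}\langle\chi^{(n-i,i)}|_G,1_G\rangle$ is a sum of nonnegative integers that only gains terms as $j$ grows, so $a_k(G)=1$ forces $a_j(G)=1$ for all $j\le k$.

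Parts (ii) and (iii), however, are outlines rather than proofs: in each case you state a correct reduction (the setwise stabilizer of a $(k-1)$-set, respectively a $k$-set, must induce the full symmetric group on it) and then defer the entire argument, explicitly labelling the remaining step ``the technical heart'' or ``the main obstacle.'' For (ii) the missing content is the induction in Livingstone--Wagner's Theorem 2: one must show that a point stabilizer $G_\alpha$ of a $k$-homogeneous group with $k\le n/2$ is $(k-1)$-homogeneous on the remaining $n-1$ points --- itself a nontrivial orbit count relating $G_\alpha$-orbits on $(k-1)$-sets avoiding $\alpha$ to $G$-orbits on $k$-sets --- after which induction on $k$ yields $(k-1)$-transitivity; none of that bookkeeping appears in your sketch. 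For (iii) the closing sentence misdescribes the logic: the hypothesis $k\ge 5$ is not there to ``exclude a handful of small sporadic groups with $k\in\{2,3,4\}$'' (for $k=2$ and $k=3$ the exceptional $k$-homogeneous, non-$k$-transitive groups form infinite families, and exceptions occurring only at smaller $k$ are not something a proof for $k\ge 5$ needs to exclude). In the actual argument $k\ge 5$ is used positively, in analyzing the group induced on a $k$-subset by its setwise stabilizer: the homogeneity inherited by that degree-$k$ group is strong enough to force it to be all of $\Sym_k$ only when $k\ge 5$. So (i) stands, but (ii) and (iii) contain genuine gaps; since the paper itself only cites \cite{LW}, the honest options are to carry out the full Livingstone--Wagner induction or to cite the result as the paper does.
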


\begin{lemma}\cite{K}\label{homogeneous not transitive}
Let $G$ be $k$-homogeneous but not $k$-transitive on a set of $n$ points, where $k\le n/2$. Then, up to permutation isomorphism, one of the following holds:

(i) $k=2$ and $G\le A\Gamma L(1,q)$ with $n=q\equiv 3\pmod 4$;

(ii) $k=3$ and $PSL(2,q)\le G\le P\Gamma L(2,q)$, where $n-1=q\equiv 3\pmod 4$;

(iii) $k=3$ and $G=AGL(1,8)$, $A\Gamma L(1,8)$ or $A\Gamma L(1,32)$; 

(iv) $k=4$ and $G=PSL(2,8)$, $P\Gamma L(2,8)$, or $P\Gamma L(2,32)$.
\end{lemma}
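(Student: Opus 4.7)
The plan is to combine Lemma \ref{lemma:LW} with case analysis on $k$, reducing the problem to known classifications of highly transitive groups. By Lemma \ref{lemma:LW}(iii), any $k$-homogeneous group with $k\ge 5$ is automatically $k$-transitive, so the lemma's list is vacuous for $k\ge 5$, and we need only handle $k\in\{2,3,4\}$. By Lemma \ref{lemma:LW}(ii), in each of these cases $G$ is already $(k-1)$-transitive, which is a strong structural input that lets us feed the problem into existing machinery.

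For $k=2$: since $G$ acts transitively on unordered pairs but not on ordered pairs of distinct elements, its action on ordered pairs has exactly two orbits, interchanged by $(x,y)\mapsto(y,x)$. Equivalently, $G$ preserves an orientation of the complete graph on $\Omega$, i.e., a tournament. A counting argument using the transitivity of $G$ on vertices and on ordered edges shows this tournament must be doubly regular, which is well-known to force $n\equiv 3\pmod 4$. One then shows that a vertex-transitive automorphism group of such a tournament is solvable with a regular elementary abelian normal subgroup, so $G$ embeds in $A\Gamma L(1,q)$ with $n=q$.

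For $k=3$ and $k=4$: invoke the CFSG-based classifications of 2-transitive and 3-transitive groups, respectively. In each case we obtain a short, explicit list of affine and almost simple families together with sporadic examples, and we test each candidate against the dual conditions of being $k$-homogeneous but not $k$-transitive. For $k=3$, the affine families leave only $AGL(1,8)$, $A\Gamma L(1,8)$, $A\Gamma L(1,32)$ (case (iii)); the almost simple families leave the projective groups of case (ii), where the parity restriction $n-1=q\equiv 3\pmod 4$ emerges from whether an outer automorphism inverts the ordering of a $3$-cycle in $PGL(2,q)$; all remaining 2-transitive groups are either already 3-transitive or fail to be 3-homogeneous. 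For $k=4$, the analogous test on the (much shorter) 3-transitive list leaves only the three projective groups of (iv).

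The main obstacle is that the proof depends essentially on the Classification of Finite Simple Groups for $k=3,4$, and on a separate, delicate analysis of doubly regular tournaments and their automorphism groups for $k=2$. No direct analogue of the 2-transitive classification is available at the 2-homogeneous level, so the $k=2$ case requires genuinely different structural arguments than $k=3,4$; conversely, in the higher-$k$ cases the difficulty is not the reduction itself but the careful bookkeeping required to distinguish, inside each known family, those subgroups which are exactly $k$-homogeneous from those which are already $k$-transitive.
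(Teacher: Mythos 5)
First, a point of reference: the paper does not prove this lemma at all --- it is quoted from Kantor \cite{K} and used as a black box, so there is no internal argument to compare yours against. Kantor's original 1972 proof is necessarily CFSG-free, whereas your route for $k=3,4$ goes through the CFSG-based classifications of $2$- and $3$-transitive groups; that is a legitimate modern reconstruction. Your opening reduction is exactly right: Lemma \ref{lemma:LW}(iii) disposes of $k\ge 5$, and Lemma \ref{lemma:LW}(ii) makes $G$ $(k-1)$-transitive in the remaining cases. The $k=2$ setup is also sound: the two orbits on ordered pairs swapped by the flip define an arc-transitive tournament; since no automorphism of a tournament is an involution, $|G|$ is odd, so $n$ and $\binom{n}{2}$ are odd, forcing $n\equiv 3\pmod 4$ (your doubly-regular counting gives the same conclusion), and Feit--Thompson plus primitivity yields a regular elementary abelian normal subgroup.

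As a proof, however, the proposal defers precisely the load-bearing steps. For $k=2$, the sentence ``so $G$ embeds in $A\Gamma L(1,q)$'' is the entire content of case (i): knowing $G\le AGL(d,p)$ with a solvable, odd-order point stabilizer $G_0\le GL(d,p)$ having at most two orbits on nonzero vectors does not by itself place $G_0$ inside $\Gamma L(1,q)$; one needs Huppert's classification of solvable $2$-transitive groups (or the Passman--Foulser analysis of solvable $3/2$-transitive linear groups), and this is neither named nor sketched. For $k=3,4$, ``test each candidate'' is where all the work lives: one must actually verify that $PSL(2,8)$, $P\Gamma L(2,8)$, $P\Gamma L(2,32)$ are $4$-homogeneous (the divisibility $\binom{9}{4}\mid|PSL(2,8)|$ is necessary but not sufficient) and rule out $AGL(d,2)$, $2^4.A_7$, $M_{11}$, $M_{22}$, and the rest of the $3$-transitive list, none of which is carried out. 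Finally, your explanation of the parity condition in (ii) is garbled: the correct mechanism is that the setwise stabilizer in $PGL(2,q)$ of $\{0,1,\infty\}$ is $\Sym_3$, whose $3$-cycles always lie in $PSL(2,q)$ but whose transpositions (determinant $-1$) lie there iff $-1$ is a square, i.e.\ iff $q\equiv 1\pmod 4$; hence $PSL(2,q)$ is $3$-homogeneous but not $3$-transitive exactly when $q\equiv 3\pmod 4$. In short, the skeleton is the standard modern argument, but every decisive verification is left as an appeal to ``existing machinery,'' so this is an outline rather than a proof.
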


\begin{lemma}[Jordan, Zassenhaus,  {\cite[\S7.6]{DM}}] \label{lemma: sharply transitive}
A  finite sharply 2-transitive group is obtained from a finite near field $F$ and has order $|F|\times |F^\#|=q(q-1)$, with degree $q=|F|$.

A sharply 3-transitive group is either $PGL(2,q)$ (with order $(q+1)q(q-1)$, degree $q+1$, where $q$ is the order of the finite field $F$), or a twisted version of it (with the same order and degree).

The only sharply 4-transitive group is $M_{11}$ (with degree $11$).

The only sharply 5-transitive group is $M_{12}$ (with degree $12$).
\end{lemma}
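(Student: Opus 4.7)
The plan is to treat the four cases separately, following the classical Jordan--Zassenhaus approach, with the identification of the Mathieu groups as the genuinely hard part. For the sharply 2-transitive case, I would first observe that $|G|=n(n-1)$ and that a point stabilizer $G_\alpha$ acts regularly on $\Omega\setminus\{\alpha\}$. A Frobenius-style counting argument shows that the identity together with the fixed-point-free elements forms a normal subgroup $T$ of order $n$ that acts regularly on $\Omega$. Identifying $\Omega$ with $T$ via this regular action turns $T$ into an additive group, and $G_\alpha$ acts regularly on $T\setminus\{0\}$ by conjugation; verifying the near-field axioms is then routine. Finiteness of $G$ forces $T$ to be an elementary abelian $p$-group, so $n=q=p^m$, and $|G|=q(q-1)$.

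For a sharply 3-transitive group $G$ on $n$ points, the stabilizer $G_\infty$ of a single point is sharply 2-transitive on the remaining $n-1$ points, so by the previous case $n-1=q$ is a prime power and $G_\infty$ arises from a near-field $F$ with $|F|=q$. The next step is to identify the $G$-action with an action by ``linear fractional transformations'' $z\mapsto (az+b)(cz+d)^{-1}$ on $F\cup\{\infty\}$, where the arithmetic is taken in the near-field $F$. When $F$ is a field this produces $PGL(2,q)$, of order $(q+1)q(q-1)$ and degree $q+1$; when $F$ is a proper (non-field) near-field one obtains the Zassenhaus ``twisted'' analogue of $PGL(2,q)$. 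The classification of finite near-fields (Zassenhaus) confines the possibilities to exactly these two families.

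For the 4- and 5-transitive cases, the inductive strategy is to exploit that a point stabilizer is sharply one degree less transitive. For $k=4$, the stabilizer is sharply 3-transitive on $n-1$ points, forcing $n-1=q+1$ and hence $n=q+2$; a careful analysis of fixed-point counts and of the block structure induced on $\Omega$ then pins down $n=11$ and identifies $G$ with $M_{11}$. The $k=5$ case reduces analogously: the point stabilizer must be $M_{11}$, giving $n=12$ and $G=M_{12}$. The hard part will be precisely this final identification: ruling out all other candidate degrees $n$ and then establishing uniqueness of the Mathieu groups as automorphism groups of the induced Steiner systems $S(4,5,11)$ and $S(5,6,12)$. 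For a self-contained exposition I would invoke Jordan's original theorem together with Witt's construction of $M_{11}$ and $M_{12}$, rather than reproving these landmark results from scratch.
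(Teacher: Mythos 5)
The paper does not actually prove this lemma: it is stated as a classical result of Jordan and Zassenhaus and cited directly to Dixon--Mortimer \cite[\S 7.6]{DM}, so there is no internal proof to compare against. Your sketch follows the standard classical route (near-field structure on the Frobenius kernel for $k=2$, linear fractional extension for $k=3$, induction on point stabilizers for $k=4,5$), and as a roadmap it is essentially sound; but note that it is not materially more self-contained than the paper's citation, since the genuinely hard steps --- Jordan's determination of the admissible degrees and Witt's uniqueness of the Steiner systems $S(4,5,11)$ and $S(5,6,12)$ --- are still invoked rather than proved.

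Two points deserve flagging. First, your step from ``the point stabilizer of a sharply 3-transitive group comes from a near-field'' to ``$G$ is $PGL(2,q)$ or its twist'' understates what is needed: Zassenhaus's classification of finite near-fields by itself does not suffice, because most near-fields (all seven exceptional ones and most Dickson near-fields) do \emph{not} extend to a sharply 3-transitive action. One must additionally determine which sharply 2-transitive groups admit the required extension by an involution inverting $\infty$ and $0$, and only the fields $GF(q)$ and certain Dickson near-fields of square order survive; this is a separate argument, not a corollary of the near-field classification. Second, both the lemma as stated and your sketch silently omit the small-degree sharply 4- and 5-transitive groups ($S_4$, $S_5$, $A_6$ for $k=4$; $S_5$, $S_6$, $A_7$ for $k=5$), which do arise in your own parametrization $n=q+2$ at $q=2,3,4$. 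These are harmless for the paper because its application only concerns degrees $n\ge k+3$, but a proof of the lemma as literally stated would have to account for them or restrict the degree.
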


\begin{lemma}[{\cite[\S3]{LPS}, \cite{DM}}]\label{lemma:k-transitive}
There are no 6-transitive groups other than $A_n$ and $S_n$.

The only 5-transitive groups are $A_n$ for all $n\ge 7$, $S_n$ for all  $n\ge 5$, $M_{12}\;(n=12)$ and $M_{24}\;(n=24)$.

The only 4-transitive but not 5-transitive groups are  $A_6 \;(n=6)$, $S_4\; (n=4)$, $M_{11}\; (n=11)$ and $M_{23}\; (n=23)$.

The 3-transitive but not 4-transitive groups are:
\begin{itemize}
\setlength\itemsep{0em}
\item $A_5\; (n=5)$, $S_3\;(n=3)$,

\item $AGL(d,2)\; (n=2^d)$, 

\item $2^4.A_7\; (n=2^4)$, 

\item $M_{11}\; (n=12)$, 

\item $M_{22}$ or $M_{22}.2 (=\Aut(M_{22}))$  $(n=22)$, 

\item or a 3-transitive subgroup of $P\Gamma L(2,q)\;\, (n=q+1)$. 
\end{itemize}
\end{lemma}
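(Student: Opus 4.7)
\medskip

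\noindent\textbf{Proof proposal.} Since any $k$-transitive group with $k\ge 2$ is in particular $2$-transitive, the plan is to appeal to the classification of finite $2$-transitive groups (which rests on the Classification of Finite Simple Groups) and then, for each family in that list, read off the exact degree of transitivity. By Burnside's theorem, the socle $N$ of a finite $2$-transitive group $G$ of degree $n$ is either (a) elementary abelian, so $n=p^d$ and $G\le \mathrm{AGL}(d,p)$ (the affine case), or (b) a non-abelian simple group with $N\le G\le \mathrm{Aut}(N)$ (the almost simple case). I would then treat these two cases separately.

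In the affine case, writing $G=V\rtimes H$ with $V=(\mathbb{Z}/p)^d$, the $2$-transitivity of $G$ translates into the transitivity of $H$ on $V\setminus\{0\}$, and the stabilizer of the zero vector acts on the remaining $n-1$ points as $H$ does on $V\setminus\{0\}$. So the degree of transitivity of $G$ is one more than the degree of transitivity of $H$ on $V\setminus\{0\}$. Scanning the (known) transitive linear groups $H\le \mathrm{GL}(d,p)$, the only configurations achieving $3$-transitivity are $H=\mathrm{GL}(d,2)$ (which acts transitively on pairs of distinct non-zero vectors, giving $G=\mathrm{AGL}(d,2)$) and the exceptional $H=A_7\le \mathrm{GL}(4,2)$ (giving $2^4.A_7$ of degree $16$); no affine example is $4$-transitive.

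In the almost simple case, the classification of $2$-transitive almost simple groups \cite[\S 7.7]{DM}, \cite{LPS} gives a finite list: the alternating and symmetric groups $A_n$, $S_n$, the Mathieu groups $M_{11},M_{12},M_{22},M_{23},M_{24}$, the groups $\mathrm{PSL}(2,q)\le G\le \mathrm{P}\Gamma\mathrm{L}(2,q)$ acting on the projective line, and a handful of further series ($\mathrm{PSU}$, Suzuki, Ree, $\mathrm{Sp}(2d,2)$, $\mathrm{HS}$, $\mathrm{Co}_3$, etc.) whose stabilizers are known. For each, one computes the order of the two-point (respectively, $j$-point) stabilizer directly and checks whether it still acts transitively on the remaining $n-j$ points; this produces the precise degree of transitivity. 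The outcome is exactly the $4$- and $5$-transitive entries $M_{11},M_{12},M_{22},M_{23},M_{24}$, $A_n$, $S_n$ given in the statement, while $\mathrm{PSL}(2,q)$-type groups contribute only at the $3$-transitive level, along with the affine contributions $\mathrm{AGL}(d,2)$ and $2^4.A_7$.

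The main obstacle is that the argument is not self-contained: it is fundamentally an invocation of CFSG via the classification of $2$-transitive groups. The genuine work is bookkeeping—carefully pairing each entry in the $2$-transitive list with the correct row in the statement, and verifying from the known structure of the point stabilizer that no further transitivity is gained. Since the lemma is quoted from \cite{LPS} and \cite{DM}, I would simply cite those sources after this outline rather than reproduce the full case analysis.
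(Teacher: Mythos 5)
The paper offers no proof of this lemma: it is stated purely as a quotation of the classification results in \cite{LPS} and \cite{DM}, and the authors explicitly note elsewhere that these citations are where the Classification of Finite Simple Groups enters their argument. Your outline --- reduce to the classification of finite $2$-transitive groups via the Burnside socle dichotomy, then determine the exact degree of transitivity family by family (transitive linear groups in the affine case, point-stabilizer structure in the almost simple case) --- is the standard route by which the cited sources establish the result, and your reading of the affine case is correct: over $\mathbb{F}_2$ any two distinct nonzero vectors are linearly independent, so $\mathrm{GL}(d,2)$ is $2$-transitive on $V\setminus\{0\}$ and $\mathrm{AGL}(d,2)$ is $3$-transitive but not $4$-transitive, with $2^4.A_7$ the lone affine exception and no affine group reaching $4$-transitivity. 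Since you, like the paper, ultimately defer to the literature rather than reproduce the full case analysis, the two treatments agree in substance; yours simply makes explicit the skeleton of the argument that the paper leaves implicit in its citation. The only bookkeeping you gloss over is the small-degree entries ($S_4$, $A_5$, $A_6$, $S_3$), which fall out of the $A_n$/$S_n$ rows once one checks the degrees below the general thresholds, so nothing essential is missing.
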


\section{Proof of Theorem \ref{main theorem}}
Recall that $S_{n,n-2}$ is Cayley for $n\ge 3$ (see \cite[Proposition 4]{CLLSS}). So from now on we assume
 $k\ge 2$ and $n\ge k+3$.

\begin{lemma}\label{sharp}
(i) If $H$ is $k$-transitive, then $H$ is sharply $k$-transitive.

(ii) If there exists a sharply $k$-transitive group $H\le\Sym_n$, then $S_{n,k}$ is Cayley.
\end{lemma}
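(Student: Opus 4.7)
The proof should be a fairly direct bookkeeping exercise built on two pieces already in place: the order identity $|H|=P(n,k)/|T|$ with $|T|$ dividing $(k-1)!$ coming from $H\cong G/T$, and Corollary \ref{Sabidussi corollary} characterizing Cayleyness by the existence of a subgroup of $\Sym_n\times\Sym_{k-1}$ of order $P(n,k)$ with trivial stabilizer at $\overline{\mathbf{e}}$. So my plan is to simply compare orders in (i), and to exhibit a witness subgroup in (ii).

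For part (i), I would start from the given setup that $S_{n,k}$ is Cayley with associated subgroup $G\le\Sym_n\times\Sym_{k-1}$. Recall $|G|=P(n,k)$ and $|H|=P(n,k)/|T|$. The assumption that $H$ is $k$-transitive on $\{1,\dots,n\}$ means that the orbit of the ordered tuple $(1,\dots,k)$ under $H$ has size $P(n,k)$, hence $|H|\ge P(n,k)$. Combining with $|H|=P(n,k)/|T|\le P(n,k)$, we force $|H|=P(n,k)$ and $|T|=1$. An equality $|H|=P(n,k)$ for a $k$-transitive group is exactly sharp $k$-transitivity, which proves (i).

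For part (ii), I would take any sharply $k$-transitive $H\le\Sym_n$ and define
$$G:=H\times\{\mathbf{e}\}\;\le\;\Sym_n\times\Sym_{k-1}.$$
Then $|G|=|H|=P(n,k)=|V(S_{n,k})|$, so by Corollary \ref{Sabidussi corollary} it only remains to verify that the stabilizer $G_{\overline{\mathbf{e}}}$ is trivial. Using the formula from Definition \ref{df:phi}, an element $(\mu,\mathbf{e})\in G$ fixes $\overline{\mathbf{e}}=[1,\dots,k]$ iff $[\mu(1),\dots,\mu(k)]=[1,\dots,k]$, i.e.\ $\mu$ fixes the ordered $k$-tuple $(1,\dots,k)$. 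Because $H$ is sharply $k$-transitive, the stabilizer in $H$ of any ordered $k$-tuple is trivial, forcing $\mu=\mathbf{e}$. Hence $G_{\overline{\mathbf{e}}}$ is trivial and $S_{n,k}$ is Cayley.

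There is no real obstacle here; the only thing to be careful about is the distinction between $k$-transitivity (transitive on ordered $k$-tuples, giving orbit size $\ge P(n,k)$) and $k$-homogeneity (transitive on $k$-sets), and the fact that in the direct product one cannot conflate trivial intersection with $\ker\pi_1$ with triviality of the stabilizer at $\overline{\mathbf{e}}$ — but the sharp-transitivity hypothesis is exactly what guarantees the latter.
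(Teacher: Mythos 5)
Your proposal is correct and follows essentially the same route as the paper: part (i) is the same counting argument (a transitive action on the $P(n,k)$ ordered $k$-tuples combined with $|H|\le P(n,k)$ forces regularity), and part (ii) uses the same witness subgroup $H\times\{\mathbf{e}\}$, merely verifying regularity via the order-plus-trivial-stabilizer criterion of Corollary \ref{Sabidussi corollary} instead of asserting it directly.
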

\begin{proof}
(i) Since $H$ acts transitively on the set of $k$-permutations of $\Omega$ and $|H|\le |G|=P(n,k)$ (which is the number of $k$-permutations), this $H$-action must be regular. So $H$ is sharply $k$-transitive.

(ii) Let $G=\{(\mu,1)|\mu\in H\}\le \Sym_n\times\Sym_{k-1}$. Then $G$, which is isomorphic to $H$, acts regularly on $V(S_{n,k})$, thus by Theorem \ref{Sabidussi},  $S_{n,k}$ is Cayley.
\end{proof}

\subsection{The case $k=2$}
We need to show that  $S_{n,2}$ is Cayley if and only if $n$ is a prime power. (It is proved in \cite{CLLSS}, but we give a different proof here.)

For the ``if'' part: assume $n$ is a prime power. By Lemma \ref{lemma: sharply transitive} there exists a sharply 2-transitive group  $G\le \Sym_n$. Then the conclusion follows from Lemma \ref{sharp}.

For the ``only if'' part: $G\le \Sym_n\times\Sym_1\cong\Sym_n$, so $H\cong G$ is sharply 2-transitive. Then the conclusion follows from Lemma \ref{lemma: sharply transitive}.

\subsection{The case $k=3$}
We need to show that  $S_{n,3}$ is Cayley if and only if $n-1$ is a prime power. 

The ``if'' part  follows from Lemma \ref{lemma: sharply transitive} and \ref{sharp} using a similar argument as in the above case $k=2$.
(It is also proved in \cite{CLLSS}.)

For the ``only if'' part:  by Lemma \ref{H is k-homogeneous}, $H$ is $3$-homogeneous. Since $n\ge k+3=2k$, by Lemma \ref{homogeneous not transitive}, it suffices to discuss the following three cases:

(i) $H$ is $3$-transitive. By Lemma \ref{sharp}, $H$ is sharply $3$-transitive. Lemma \ref{lemma: sharply transitive} then implies that $n-1$ is a prime power.

(ii) $PSL(2,q)\le H\le P\Gamma L(2,q)$ and $n-1=q$ is  a prime power.

(iii) $H=AGL(1,8)$, $A\Gamma L(1,8)$ or $A\Gamma L(1,32)$. Then $n=8$ or $32$, so $n-1=7$ or $31$, which is prime in either case.

\subsection{The case $4\le k\le n/2$} We need to show that  $S_{n,k}$ is Cayley if and only if $(n,k)=(9,4), (11,4), (12,5), (33,4)$. For the ``if'' part, the cases $(n,k)=(11,4)$ and $(12,5)$ are clear because there exist sharply $k$-transitive groups of degree $n$ (Lemma \ref{lemma: sharply transitive}); the cases $(9,4), (33,4)$ will be proved later.

For the ``only if'' part: by Lemma \ref{H is k-homogeneous}, $H$ is $k$-homogeneous. By Lemma \ref{homogeneous not transitive}, it suffices to discuss the following two cases:

(i) $H$ is $k$-transitive. By Lemma \ref{sharp}, $H$ is sharply $k$-transitive. Lemma \ref{lemma: sharply transitive} then implies that $(n,k)=(11,4)$ or $(12,5)$.

(ii) $k=4$ and $H=PSL(2,8)$, $P\Gamma L(2,8)$, or $P\Gamma L(2,32)$. The corresponding $(n,k)=(9,4)$ or $(33,4)$.

\subsection{The case $n/2< k \le n-3$} We need to show that  $S_{n,k}$ is Cayley if and only if $(n,k)=(9,6)$, $(33,30)$, and possibly some $(2^d,2^d-3)$ for large $d$. We will prove the ``if'' part later.

For the ``only if'' part: we consider two cases:\\

(a) $H$ is $3$-transitive. First, recall that except the alternating group $A_n$, any proper subgroup of $\Sym_n$ has index at least $n$. So it is ``difficult'' to have a subgroup of $\Sym_n$ with a small index. Along the line is the following much stronger result which will play an important role in this paper.

\begin{theorem}\cite[Theorem 5.2B]{DM}\label{5.2B}
Let $a\ge 5$, $1\le r\le a/2$, and suppose $T\le \Sym_a$ has index $|\Sym_a:T|<\binom{a}{r}$. Then one of the following holds:

(i) for some $\Delta\subseteq\{1,\dots,a\}$, with $|\Delta|<r$, we have $A_{(\Delta)}\le T\le S_{\{\Delta\}}$, where the group $A_{(\Delta)}$ consists of all elements in the alternating group $A_a$ that fix every element in $\Delta$, the group $S_{\{\Delta\}}$ consists of all elements in the symmetric group $\Sym_a$ that fix the set $\Delta$ (but may permute elements in $\Delta$). 

(ii) $a=2m$ is even, $T$ is imprimitive with two blocks of size $m$, and $|\Sym_a:T|=\frac{1}{2}\binom{a}{m}$.

(iii) one of the six exceptional cases happens (where $r$ is the minimum number satisfying the hypotheses of the theorem
\footnote{Even though this minimum condition on $r$ is not stated in \cite[Theorem 5.2B]{DM}, a Remark after Theorem 5.2B mentions that ``the group are listed only with the minimum $r$ for which they satisfy the hypotheses of the theorem''. Without the minimum condition, there could be more exceptional cases such as $(6,3,6)$, $(6,3,12)$, $(8,4,30)$.}
): $(a,r,|\Sym_a:T|)=(6,3,15), (5,2,6), (6,2,6), (6,2,12), (7,3,30), (8,3,30)$. 
\end{theorem}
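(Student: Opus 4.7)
The plan is to analyze the action of $T$ on $\Omega=\{1,\dots,a\}$, split into three cases according to whether $T$ is intransitive, transitive imprimitive, or primitive, and in each case to extract from the bound $|\Sym_a:T|<\binom{a}{r}$ enough information to land in one of the listed conclusions.

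First, for the intransitive case, let $\Delta$ be a $T$-orbit of minimum size $d$. Then $T\le S_{\{\Delta\}}$, and since $|\Sym_a:S_{\{\Delta\}}|=\binom{a}{d}$, the hypothesis immediately forces $d<r$, placing $\Delta$ in the range required by (i). To upgrade this to $A_{(\Delta)}\le T$, I would examine the image $\bar T$ of $T$ under the restriction map $S_{\{\Delta\}}\to \Sym_{\Omega\setminus\Delta}$ and argue inductively, applying the theorem to the smaller set $\Omega\setminus\Delta$ and using the minimality of $r$, that $\bar T$ must contain $A_{\Omega\setminus\Delta}$. Combined with the kernel being the trivial subgroup of $\Sym_\Delta$ (since $T$ was arbitrary on $\Delta$ anyway), this yields conclusion (i), and in particular covers $T\in\{A_a,\Sym_a\}$ via $\Delta=\emptyset$.

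For the transitive but imprimitive case, suppose $T$ preserves a block system of $b$ blocks of size $m=a/b>1$. Then $T\le \Sym_m\wr \Sym_b$, whose index in $\Sym_a$ is $a!/((m!)^b b!)$. A direct combinatorial estimate shows this index exceeds $\binom{a}{r}$ for every $r\le a/2$ \emph{except} in the two-block case $b=2$, $m=a/2$, where the wreath product has index exactly $\tfrac12\binom{a}{m}$, giving conclusion (ii). Ruling out block systems of every other shape is the most computationally delicate part of this step, since one must consider separately small $b$, large $b$, and the $b=2,\,m>a/2$ nonexistent case.

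Finally, in the primitive case, the appropriate tool is Bochert's theorem (or its modern refinements via the classification of finite simple groups), which asserts that any primitive proper subgroup of $\Sym_a$ other than $A_a$ has index at least of order $\lfloor (a+1)/2\rfloor!$. For $a$ sufficiently large this bound exceeds $\binom{a}{\lfloor a/2\rfloor}$, so the primitive case collapses to $T\in\{A_a,\Sym_a\}$ (already absorbed into (i)). For the finitely many remaining small values of $a$ (here $a\in\{5,6,7,8\}$), one carries out a direct inspection using the explicit list of primitive groups of small degree, and this hand-check produces the six sporadic triples of (iii). The main obstacle I expect is precisely this primitive case: without a classification of small-degree primitive permutation groups one cannot obtain a clean finite list of exceptions, and verifying both that the six listed triples really satisfy the hypotheses and that $r$ is minimal for each of them requires careful case-by-case verification inside $\Sym_5,\dots,\Sym_8$.
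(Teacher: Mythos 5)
The paper does not prove this statement: it is quoted (with a footnote about the minimality of $r$) from Dixon and Mortimer \cite[Theorem 5.2B]{DM}, so there is no in-paper proof to compare against. Your outline is essentially the standard argument used in that reference — the trichotomy intransitive / transitive imprimitive / primitive, the wreath-product index computation $a!/((m!)^b\,b!)$ for the middle case, and Bochert's bound together with an inspection of small-degree primitive groups for the last case — so as a roadmap it is sound. Three soft spots are worth flagging. First, in the intransitive case a single minimal orbit $\Delta$ need not satisfy $A_{(\Delta)}\le T$ (for instance, if $T$ has two fixed points, conclusion (i) requires $\Delta$ to contain both of them); the correct $\Delta$ is in general a union of small orbits, which your induction must be set up to accumulate. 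Second, passing from ``the image of $T$ in $\Sym_{\Omega\setminus\Delta}$ contains $A_{\Omega\setminus\Delta}$'' to ``$A_{(\Delta)}\le T$'' is not automatic; it needs the simplicity of $A_{\Omega\setminus\Delta}$ (so $|\Omega\setminus\Delta|\ge 5$) together with the observation that $T\cap A_{\Omega\setminus\Delta}$ is normal in that image with quotient of order at most $|\Delta|!$, and your parenthetical about ``the kernel being trivial'' does not capture this. Third, Bochert's theorem is classical and does not require the classification of finite simple groups, but its bound $\lfloor (a+1)/2\rfloor!$ only exceeds $\binom{a}{\lfloor a/2\rfloor}$ once $a$ is roughly $13$ or more, so the residual hand-check of primitive groups must cover degrees beyond $a\le 8$, even though all six exceptional triples happen to have $a\le 8$.
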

\medskip

Now we discuss case-by-case the list given in Lemma \ref{lemma:k-transitive}:\\

If $H=A_n$ or $S_n$, then $|H|$ does not divide $P(n,k)$ since $k\le n-3$. So this case is impossible.\\

If $H=M_{11}$ ($n=11$), then $6\le k\le 8$, and $11\cdot 10\cdot9\cdot8=|H|=P(11,k)/t$, where $t|(k-1)!$. The only possible solution is $k=8$ and $t=7\cdot 6\cdot 5\cdot 4$. 
Assume $T\le \Sym_7$ has order $t=7\cdot 6\cdot5\cdot4$, thus $|\Sym_7:T|=6<7$, which is impossible. 
\\

If $H=M_{12}$ ($n=12$), then $6\le k\le 9$, and $12\cdot 11\cdot 10\cdot9\cdot8=|H|=P(12,k)/t$, where $t|(k-1)!$.  The possible solutions are $k=8$ and $t=7\cdot 6\cdot 5$, or $k=9$ and $t=7\cdot 6\cdot 5\cdot 4$. 

First, assume $T\le \Sym_7$ has order $t=7\cdot6\cdot5$, thus $|\Sym_7:T|=24$. Using notation in Theorem \ref{5.2B}, $(a,r,|\Sym_a:T|)=(7,3, 24)$, clearly not the case (ii) or (iii) in Theorem \ref{5.2B}. So it has to be case (i): there is $\Delta$ with $|\Delta|<3$ and $A_{(\Delta)}\le T$. But since $A_{(\Delta)}\cong A_{a-|\Delta||}$,  $|A_{(\Delta)}|=(a-|\Delta|)!/2$ is a multiple of $(a-2)!/2=5!/2=60$, therefore $A_{(\Delta)}\le T$ implies 
$60|\; 7\cdot 6\cdot 5$, a contradiction. 

Next,  assume $T\le \Sym_8$ has order $t=7\cdot6\cdot5\cdot 4$, thus $|\Sym_8:T|=48$. Using notation in Theorem \ref{5.2B}, $(a,r,|\Sym_a:T|)=(8,3, 48)$, clearly not the case (ii) or (iii) in Theorem \ref{5.2B}. So again it has to be case (i). As above, $|A_{(\Delta)}|$ is a multiple of $(a-2)!/2=6!/2=360$, and divides $7\cdot6\cdot5\cdot4$, a contradiction. So this case is impossible.\\

If $H=M_{23}$ ($n=23$), then $3\cdot 16\cdot 20\cdot21\cdot22\cdot23=|H|=P(23,k)/t$ where $t|(k-1)!$. The only possible solution is $k=20$, and $t=19\cdot 18\cdot\cdots \cdot 4/(3\cdot 16)$. 
Assume $T\le \Sym_{19}$ has order $t=19\cdot 18\cdot\cdots \cdot 4/(3\cdot 16)$,
 thus $|\Sym_{19}:T|=3\cdot 16\cdot 6$. Using notation in Theorem \ref{5.2B}, $(a,r,|\Sym_a:T|)=(19,3, 3\cdot16\cdot6)$, clearly not the case (ii) or (iii) in Theorem \ref{5.2B}. So again it has to be case (i). As above, $|A_{(\Delta)}|$ is a multiple of $17!/2$, and divides $19\cdot 18\cdot\cdots \cdot 4/(3\cdot 16)$, therefore $3\cdot 16\cdot 3|19\cdot 18$, a contradiction. So this case is impossible.\\

If $H=M_{24}$ ($n=24$), then $3\cdot 16\cdot 20\cdot21\cdot22\cdot23\cdot24=|H|=P(24,k)/t$ where $t|(k-1)!$. The possible solutions are 
$k=20$ and $t=19\cdot 18\cdot\cdots \cdot 5/(3\cdot 16)$,
or
$k=21$ and $t=19\cdot 18\cdot\cdots \cdot 4/(3\cdot 16)$.

First, assume $T\le \Sym_{19}$ has order  $t=19\cdot 18\cdot\cdots \cdot 5/(3\cdot 16)$, thus $|\Sym_{19}:T|=3\cdot 16\cdot24$. Using notation in Theorem \ref{5.2B}, $(a,r,|\Sym_a:T|)=(19,4, 3\cdot16\cdot24)$, clearly not the case (ii) or (iii) in Theorem \ref{5.2B}. So it has to be case (i): there is $\Delta$ with $|\Delta|<4$ and $A_{(\Delta)}\le T$. But since $A_{(\Delta)}\cong A_{a-|\Delta||}$,  $|A_{(\Delta)}|=(a-|\Delta|)!/2$ is a multiple of $(a-3)!/2=16!/2$, therefore $A_{(\Delta)}\le T$ implies 
$16!/2\;|\; 19\cdot 18\cdot\cdots \cdot 5/(3\cdot 16)$, that is, $3\cdot 16\cdot 12 | 19\cdot 18\cdot 17$, a contradiction. 

Next,  assume $T\le \Sym_{20}$ has order $t=19\cdot 18\cdot\cdots \cdot 4/(3\cdot 16)$, thus $|\Sym_{20}:T|=3\cdot16\cdot120$. Using notation in Theorem \ref{5.2B}, $(a,r,|\Sym_a:T|)=(20,5, 3\cdot16\cdot120)$, clearly not the case (ii) or (iii) in Theorem \ref{5.2B}. So again it has to be case (i). As above, $|A_{(\Delta)}|$ is a multiple of $(a-4)!/2=16!/2$, therefore $16!/2\;|\; 19\cdot 18\cdot\cdots \cdot 4/(3\cdot 16)$, that is,  $3\cdot 16\cdot 3 | 19\cdot 18\cdot 17$, a contradiction. 

So this case is impossible.\\

If $H=2^4.A_7$ ($n=2^4$), by Lemma \ref{lemma:k-transitive}, $H$ is not 4-transitive, so  $k=13$. But $t=P(16,13)/|H|=P(16,13)/8!$, which does not divide $12!$. So this case is impossible.\\

If $H=M_{11}$ ($n=12$), then $k=9$ (because $H$ is not $4$-transitive). Then $t=P(12,9)/|H|=P(12,9)/(11\cdot 10\cdot 9\cdot 8)=12\cdot 7\cdot 6\cdot 5\cdot 4$.
Assume $T\le \Sym_{8}$ has order  $t$, then $|\Sym_{8}:T|=4<8$, which is impossible.\\

If $H=M_{22}$ or $M_{22}.2$ ($n=22$), then $k=19$ (because $H$ is not $4$-transitive). Then $t=P(22,19)/|H|$, which is either $P(22,19)/(3\cdot 6\cdot22\cdot 21\cdot 20)$ or half of it. In either case, $19|t$, thus $t\nmid18!$. So this case is impossible.\\

If $H$ is a $3$-transitive subgroup of $P\Gamma L(2,q)$ ($n=q+1$), then $k=q-2$ (because $H$ is not $4$-transitive). Then $t=P(q+1,q-2)/|H|$ is a multiple of $P(q+1,q-2)/(rq(q^2-1))=(q-2)(q-3)\cdots4/r$ (recall that $q=p^r$). It follows from $t|(q-3)!$ that $q-2(=p^r-2)$ divides $6r$. From the inequality $6r\ge p^r-2\ge 2^r-2$ we get $1\le r\le 5$.  It is then easy to verify by hand that there are two solutions: $q=2^3$, $2^5$. Correspondingly, $(n,k)=(9,6)$ or $(33,30)$, as we expected.\\

If $H=AGL(d,2)$, the proof is lengthy and tricky, so we leave it to Section \ref{H=AGL(d,2)}.

\bigskip

(b) $H$ is not 3-transitive. By Lemma \ref{H is k-homogeneous}, $H$ is $k$-homogeneous, thus is also $(n-k)$-homogeneous, where $n-k\ge 3$. Then by Lemma \ref{lemma:LW}, $n=k+3$ and $H$ is $3$-homogeneous. Then $H$ is listed as (ii) or (iii) of Lemma \ref{homogeneous not transitive}. The case (ii) is discussed in (a). So we only need to discuss (iii).

If $H=AGL(1,8)$, then $n=8$, $k=5$, and
$$\dfrac{P(8,5)}{t}=|H|=56$$
where $t|4!$. But then $t=P(8,5)/56=120>4!$, and we have reached a contradiciton.\\

If $H=A\Gamma L(1,8)$, which has order 168, $n=8$, $k=5$, and
$$ \dfrac{P(8,5)}{t}=|H|=168$$
where $t| 4!$. But then $t=P(8,5)/168=40 >4!$, and we have reached another contradiction. \\

If $H=A \Gamma L(1,32)$, $n=32$, $k=29$, and
$$\dfrac{P(32,29)}{t}=|H|=32\cdot 31\cdot 5$$
where $t| 28!$. But then $t=P(32,29)/(32\cdot 31\cdot 5)>28!$, and we have reached another contradiction.

\subsection{Constructive proof of Cayleyness} In this subsection, we shall prove that $S_{9,4}$, $S_{9,6}$, $S_{33,4}$, $S_{33,30}$ are Cayley.

First we define $\lambda$-transitivity introduced by W. Martin and B. Sagan	\cite{MS}. Let $\Omega$ be a set with $n$ elements and let $\lambda=(\lambda_1,\dots,\lambda_m)$ be a partition. A permutation group $G$ on the set $\Omega$ is called {\em $\lambda$-transitive} if $G$ acts transitively on the set of ordered tuples $(P_1,\dots,P_m)$  of pairwise disjoint subsets of $\Omega$ satisfying $|P_i|=\lambda_i$ for $1\le i\le m$ (we say that the ordered tuples have type $\lambda$). Of course, $\lambda$-transitivity does not change if we permute numbers in $\lambda$. 

For convenience, we say that $G$ is {\em sharply $\lambda$-transitive} (resp. {\em $\lambda$-free}) if $G$ acts regularly (resp. freely) on the above set of ordered tuples. Since the number of such ordered tuples is $n!/\prod \lambda_i!$, it is obvious that if $|G|=n!/\prod \lambda_i!$, then the following three conditions are equivalent:
\begin{itemize}
\item $G$ is sharply $\lambda$-transitive; 
\item $G$ is $\lambda$-transitive;
\item $G$ is $\lambda$-free.
\end{itemize}

The following lemma follows from the recent classification of $\lambda$-transitive groups (see \cite{DoM,AAC}); nevertheless, we include a straightforward proof here for self-containedness.
\begin{lemma}\label{lambda transitive}
The following permutation groups are sharply $\lambda$-transitive: $PSL(2,8)\le\Sym_9$ for $\lambda=(5,3,1)$, and $P\Gamma L(2,32)\le\Sym_{33}$ for $\lambda=(29,3,1)$.
\end{lemma}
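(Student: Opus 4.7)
The plan is to exploit the equivalence recorded immediately before the lemma: if $|G|=n!/\prod_i\lambda_i!$, then $\lambda$-transitive, sharply $\lambda$-transitive, and $\lambda$-free all coincide. First I would check the orders: $|PSL(2,8)|=504=9!/(5!\,3!\,1!)$, and $|P\Gamma L(2,32)|=33\cdot 32\cdot 31\cdot 5=33!/(29!\,3!\,1!)$. So in each case it suffices to prove $\lambda$-freeness: no non-identity element $g\in G$ fixes an ordered tuple $(P_1,P_2,P_3)$ of type $\lambda$ in $\mathbb{P}^1(\mathbb{F}_q)$, where $q=8$ or $32$.

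Since $|P_3|=1$, such a $g$ must fix the point $p\in P_3$ and preserve the $3$-subset $P_2\subset \mathbb{P}^1(\mathbb{F}_q)\setminus\{p\}$ setwise (the complement $P_1$ is then automatically preserved). Placing $p=\infty$, the stabilizer of $p$ in $PGL(2,q)$ acts on the affine line $\mathbb{F}_q$ as $\mathrm{AGL}(1,q)$, and the stabilizer in $P\Gamma L(2,q)$ is $\mathrm{A\Gamma L}(1,q)$ (noting that $PSL(2,8)=PGL(2,8)$ since the characteristic is $2$). So the task reduces to showing that no non-identity element of these subgroups preserves a $3$-subset of $\mathbb{F}_q$ setwise. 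Since a subset is preserved exactly when it is a union of cycles of $g$, I would finish by a cycle-structure case analysis and verify that $3$ is never a sum of cycle lengths of any non-identity element.

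For $g:x\mapsto ax+b$ in $\mathrm{AGL}(1,q)$ with $q\in\{8,32\}$: if $a=1$ and $b\ne 0$, then in characteristic $2$ the element $g$ is a fixed-point-free involution, a product of $q/2$ transpositions; if $a\ne 1$, then $g$ has one fixed point in $\mathbb{F}_q$ and the remaining $q-1$ points form a single orbit of length $\mathrm{ord}(a)=q-1$, since $|\mathbb{F}_q^*|\in\{7,31\}$ is prime. Neither situation admits a $3$-subset as a union of cycles. For $g:x\mapsto ax^\sigma+b$ in $\mathrm{A\Gamma L}(1,32)$ with $\sigma$ a non-trivial Frobenius, iterating yields $g^5(x)=N_{\mathbb{F}_{32}/\mathbb{F}_2}(a)\cdot x + c = x+c$, so $g$ has order $5$ (if $c=0$) or $10$ (otherwise). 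In the order-$10$ case, $g^5$ is a fixed-point-free involution, so every cycle of $g$ has length $2$ or $10$, giving only even cycle-union sizes. In the order-$5$ case, all cycles have length $1$ or $5$, and the $\mathbb{F}_2$-linear map $\phi(x)=x+ax^\sigma$ has kernel of size $2$ (since the map $x\mapsto x^\sigma/x$ is a group homomorphism on $\mathbb{F}_{32}^*$ with kernel the fixed subfield $\mathbb{F}_2^*=\{1\}$, hence a bijection), so $g$ has $0$ or $2$ fixed points on $\mathbb{F}_{32}$; the cycle-union sizes are therefore of the form $5k$ or $5k+2$, and $3$ is excluded.

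The main obstacle is the Frobenius-twisted case in $\mathrm{A\Gamma L}(1,32)$: one must iterate $g$ to extract its order via the norm identity $N(a)=a^{31}=1$, and then pin down the fixed-point count via the $\mathbb{F}_2$-linearity of $\phi$. With these cycle structures in hand, the final arithmetic check that $3$ is not a sum of admissible cycle lengths is immediate, and the lemma follows.
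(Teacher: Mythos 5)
Your proof is correct, but it takes a genuinely different route from the paper's. Both begin with the same order computation ($|PSL(2,8)|=9!/(5!\,3!\,1!)$, $|P\Gamma L(2,32)|=33!/(29!\,3!\,1!)$) and the observation that it therefore suffices to establish $\lambda$-freeness. The paper then verifies triviality of the stabilizer of a single explicit tuple, namely the one with $P_2=\{\bar 0,\bar 1,\bar z\}$ and $P_3=\{\infty\}$, by direct computation: it reduces to $\{\beta,\alpha+\beta,\alpha z^\sigma+\beta\}=\{0,1,z\}$ and rules out the six possible assignments by hand using the explicit minimal polynomials $z^3+z+1$ and $z^5+z^2+1$. (Checking one tuple suffices because a trivial stabilizer there forces the orbit to have size $|G|$, which equals the total number of tuples, whence the action is regular.) You instead conjugate the fixed singleton to $\infty$ and analyze the cycle structure of an arbitrary non-identity element of the point stabilizers $\mathrm{AGL}(1,q)$ and $\mathrm{A\Gamma L}(1,32)$, showing that no such element can preserve any $3$-subset of $\mathbb{F}_q$ setwise. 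Your argument is more structural and exposes \emph{why} the lemma holds: it hinges on $q-1\in\{7,31\}$ being prime (so every $x\mapsto ax+b$ with $a\neq 1$ is a single $(q-1)$-cycle away from its unique fixed point), on translations being fixed-point-free involutions in characteristic $2$, and on the norm identity and the $\mathbb{F}_2$-linearity of $x\mapsto x+ax^\sigma$ in the Frobenius-twisted case; it also treats all tuples of the given type uniformly rather than one representative. The paper's verification is shorter but entirely ad hoc. One tiny slip in your write-up: in the order-$5$ case with two fixed points, the attainable union-of-cycle sizes also include $5k+1$ (taking just one of the two fixed points), not only $5k$ and $5k+2$; this does not affect the conclusion, since $3$ is not of any of these forms.
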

\begin{proof}
Since $|PSL(2,8)|=9\cdot 8\cdot 7=9!/(5!3!1!)$ and $|P\Gamma L(2,32)|=5\cdot 33\cdot 32\cdot 31=33!/(29!3!1!)$, it suffices to prove these two groups are $\lambda$-free. 

For $PSL(2,8)$:  denote the finite field $\mathbb{F}_8=F_2[z]/(z^3+z+1)$,  
$$\bar{0}=\begin{bmatrix}
0\\1
\end{bmatrix}
,
\bar{1}=\begin{bmatrix}
1\\1
\end{bmatrix}
,
\bar{z}=
\begin{bmatrix}
z\\ 1
\end{bmatrix}
,
\infty=\begin{bmatrix}
1\\0
\end{bmatrix}
\in \mathbb{P}^1(\mathbb{F}_8).$$ 
Recall that $\mu\in PSL(2,8)$ acts on $\mathbb{P}^1(\mathbb{F}_8)$ by matrix multiplication
$\begin{bmatrix}
x\\y
\end{bmatrix}
\mapsto
\begin{bmatrix}
\alpha&\beta\\ \gamma&\delta
\end{bmatrix}
\begin{bmatrix}
x\\y
\end{bmatrix}
=
\begin{bmatrix}
\alpha x+\beta y\\ \gamma x+\delta y
\end{bmatrix}
$. 
Assume that $\mu$ fixes $\infty$ and permutes $\bar{0}, \bar{1}, \bar{z}$ (thus permutes the remaining 5 elements). We want to show that $\mu={\rm id}$. 
Since $\mu(\infty)=\infty$, we have $\gamma=0$, and without loss of generality we assume $\delta=1$. Then 
$$\{\beta, \alpha+\beta,\alpha z+\beta\} = \{0,1,z\}.$$
A case-by-case computation shows that $\alpha=1, \beta=0$, thus $\mu={\rm id}$.
\medskip

For $P\Gamma L(2,32)$: denote the finite field $F_{32}=F_2[z]/(z^5+z^2+1)$ and $\bar{0}, \bar{1}, \bar{z}$ as above. Recall that 
 $\mu$ acts by
$\begin{bmatrix}
x\\y
\end{bmatrix}
\mapsto
\begin{bmatrix}
\alpha&\beta\\ \gamma&\delta
\end{bmatrix}
\begin{bmatrix}
x^\sigma\\y^\sigma
\end{bmatrix}
=
\begin{bmatrix}
\alpha x^\sigma+\beta y^\sigma\\ \gamma x^\sigma+\delta y^\sigma
\end{bmatrix}
$ for some $\sigma\in \Aut(\mathbb{F}_{32})$, where $\sigma=\varphi^m$ for some $0\leq m\leq 4$ where $\varphi$ is the Frobenius automorphism defined by $x\mapsto x^2$. In particular $\sigma(z)\in \{z, z^2, z^4, z^8, z^{16}\}=\{z, z^2, z^4, z^3+z^2+1, z^4+z^3+z+1\}$.   
 
As above, assume $\mu$ fixes $\infty$ and permutes $\bar{0}, \bar{1}, \bar{z}$; and we can assume that $\gamma=0, \delta=1$. Then
$$\{\beta, \alpha+\beta,\alpha z^\sigma+\beta\} = \{0,1,z\}.$$
We do a case-by-case computation as follows:

If $(\beta, \alpha+\beta,\alpha z^\sigma+\beta)=(0,1,z)$: then $\alpha=1$, $z^\sigma=z$, $\sigma={\rm id}$,  thus $\mu={\rm id}$.

If $(\beta, \alpha+\beta,\alpha z^\sigma+\beta)=(0,z,1)$: then $\alpha=z$, $zz^\sigma=1$, $z^\sigma=z^{-1}=z^4+z$, which is impossible. 

If $(\beta, \alpha+\beta,\alpha z^\sigma+\beta)=(1,0,z)$: then $\alpha=\beta=1$, $z^\sigma+1=z$, $z^\sigma=z+1$, which is impossible. 

If $(\beta, \alpha+\beta,\alpha z^\sigma+\beta)=(1,z,0)$: then $\beta=1$, $\alpha=z+1$, $zz^\sigma+1=0$, $z^\sigma=z^4+z$, which is impossible. 

If $(\beta, \alpha+\beta,\alpha z^\sigma+\beta)=(z,0,1)$: then $
\alpha=\beta=z$, $zz^\sigma+z=1$, $z^\sigma=z^{-1}+1=z^4+z+1$, which is impossible. 

If $(\beta, \alpha+\beta,\alpha z^\sigma+\beta)=(z,1,0)$: then $\beta=z$, $\alpha=z+1$, $(z+1)z^\sigma+z=0$, which is impossible. 
\end{proof}

\begin{proposition}
Given $k\ge 2$, $n\ge k+2$, define $\lambda=(n-k,k-1,1)$. If there exists a permutation group $H\le \Sym_n$ that is sharply $\lambda$-transitive, then $S_{n,k}$ is Cayley.

As a consequence, $S_{9,4}$, $S_{9,6}$, $S_{33,4}$, $S_{33,30}$ are Cayley. 
\end{proposition}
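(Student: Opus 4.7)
The plan is to apply Corollary \ref{Sabidussi corollary}(ii) with the explicit subgroup
\[
G := H \times \Sym_{k-1} \;\le\; \Sym_n \times \Sym_{k-1}.
\]
Sharp $\lambda$-transitivity of $H$ gives $|H| = n!/((n-k)!(k-1)!)$, and therefore $|G| = |H|\cdot(k-1)! = P(n,k) = |V(S_{n,k})|$. So by cardinality, it is enough to prove that the evaluation map $\varepsilon:(\mu,\nu)\mapsto\overline{\mu\nu^{-1}}$ is injective on $G$.

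The main step is the observation that $\overline{\mu\nu^{-1}}$ encodes exactly the ordered triple of pairwise disjoint subsets
\[
\bigl(\{\mu(1)\},\ \{\mu(2),\dots,\mu(k)\},\ \{\mu(k+1),\dots,\mu(n)\}\bigr),
\]
which has type $\lambda$ (up to the harmless permutation of parts allowed by the definition of $\lambda$-transitivity). Indeed, since $\nu\in\Sym_{k-1}$ fixes $1$ and permutes $\{2,\dots,k\}$, the first coordinate of $\overline{\mu\nu^{-1}}$ is $\mu(1)$, the remaining $k-1$ entries form the set $\{\mu(2),\dots,\mu(k)\}$, and the third block is recovered by complementation in $\{1,\dots,n\}$. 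Then if $(\mu,\nu), (\mu',\nu')\in G$ satisfy $\overline{\mu\nu^{-1}} = \overline{\mu'\nu'^{-1}}$, the elements $\mu$ and $\mu'$ send the base triple $(\{1\},\{2,\dots,k\},\{k+1,\dots,n\})$ to the same image triple, and sharpness of the $\lambda$-action forces $\mu = \mu'$. Cancelling $\mu$ coordinatewise then gives $\nu^{-1} = \nu'^{-1}$ on $\{1,\dots,k\}$, so $\nu = \nu'$ since elements of $\Sym_{k-1}$ are determined by their restriction to $\{2,\dots,k\}$. Hence $\varepsilon$ is injective, and the corollary implies $S_{n,k}$ is Cayley.

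For the consequence, I would simply match each sporadic pair to the groups produced by Lemma \ref{lambda transitive}: for $(n,k)\in\{(9,4),(9,6)\}$ we have $\lambda=(5,3,1)$ up to reordering, so $H = PSL(2,8)\le \Sym_9$ suffices; for $(n,k)\in\{(33,4),(33,30)\}$ we have $\lambda=(29,3,1)$, so $H = P\Gamma L(2,32)\le \Sym_{33}$ suffices.

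I do not expect any serious obstacle. The only conceptually nontrivial move is recognizing that $\overline{\mu\nu^{-1}}$ depends on $\nu$ only through the unordered set $\{\mu(2),\dots,\mu(k)\}$ rather than the ordered tuple, which is precisely what dovetails with the definition of $\lambda$-transitivity on unordered blocks; once that bridge is built the $\Sym_{k-1}$ factor absorbs all the freedom in the internal ordering of the middle block, and the construction $G = H\times\Sym_{k-1}$ is forced.
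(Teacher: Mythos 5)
Your proposal is correct and follows essentially the same route as the paper: the same subgroup $G=H\times\Sym_{k-1}$, the same cardinality count, and the same key observation that $\overline{\mu\nu^{-1}}$ determines the ordered type-$\lambda$ partition $(\{\mu(1)\},\{\mu(2),\dots,\mu(k)\},\{\mu(k+1),\dots,\mu(n)\})$, so that sharpness forces triviality. The only cosmetic difference is that you verify injectivity of $\varepsilon$ (Corollary~\ref{Sabidussi corollary}(ii)) while the paper verifies triviality of the stabilizer $G_{\overline{\mathbf{e}}}$ (Corollary~\ref{Sabidussi corollary}(iii)); these are equivalent.
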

\begin{proof}
Define $G=H\times \Sym_{k-1}\le \Sym_n\times \Sym_{k-1}$. 
Since $|G|=|H|\cdot (k-1)!=(n)!/(n-k)!=V(S_{n,k})$, 
 it suffices to show that $G_{\bar{\mathbf{e}}}$ is trivial. Then $S_{n,k}$ is Cayley by Corollary \ref{Sabidussi corollary}. 

 Assume $(\mu,\nu)\in G_{\bar{\mathbf{e}}}$, that is, $\overline{\mu\nu^{-1}}=\bar{\mathbf{e}}$. Then $(\mu(2),\dots,\mu(k))=(\nu(2),\dots,\nu(k))$ is a permutation of $\{2,\dots,k\}$, and $\mu(1)=\nu(1)=1$, so $\mu$ fixes the ordered tuple $(\{2,\dots,k\},\{1\},\{k+1,\dots,n\})$, which has type $(n-k,k-1,1)$. Then $\mu={\rm id}$ (thus $\nu={\rm id}$), since $H$ is sharply $\lambda$-transitive. 

The consequence follows from Lemma \ref{lambda transitive}. Indeed, for $S_{9,4}$, we have $\lambda=(5,3,1)$ and Lemma \ref{lambda transitive} asserts that $H=PSL(2,8)$ is sharply $(5,3,1)$-transitive, thus $S_{9,4}$ is Cayley. For $S_{9,6}$, we have $\lambda=(3,5,1)$; since $\lambda$-transitivity does not depend on the order of numbers listed in $\lambda$,  $H=PSL(2,8)$ is also sharply $(3,5,1)$-transitive, therefore $S_{9,6}$ is also Cayley. The Cayleyness of $S_{33,4}$ and $S_{33,30}$ can be proved similarly. 
\end{proof}

\section{The case $H=AGL(d,2)$} \label{H=AGL(d,2)}

In this section, we deal with the case $H=AGL(d,2)$.
In this case,
$n=2^d$ and $k=2^d-3$ (because $H$ is not $4$-transitive), where $d\ge 3$, and 
\begin{equation*}\label{agl}\dfrac{P(2^d,2^d-3)}{t}=|H|=2^d(2^d-1)(2^d-2^1)\cdots(2^d-2^{d-1}) \end{equation*}
Assume a subgroup $T\le S_{2^d-4}$ has order
\begin{equation}\label{t}
|T|=t=\frac{P(2^d,2^d-3)}{2^d(2^d-1)(2^d-2^1)\cdots(2^d-2^{d-1})},
\end{equation}
thus 
\begin{equation}\label{divide(2^d-4)!}
t|(2^d-4)!
\end{equation}
 We want to get some contradiction to conclude that this case is impossible. Below we give a short ``proof'' that relies on a conjecture, and a complete but longer proof.

\subsection{A conjectural proof}
Here we present a simple ``proof'' that relies on a conjecture in number theory. Indeed, if $H$ exists, then from the above we have
\begin{equation*}\label{agl}\dfrac{2^d(2^d-1)\cdots (2^d-(2^d-4))}{t}=|H|=2^d(2^d-1)(2^d-2^1)\cdots(2^d-2^{d-1}) \end{equation*}
so
 $$2^d! = 6t\cdot 2^d(2^d-1)\cdots (2^d-2^{d-1})\Big| 6(2^d-4)!  \cdot 2^d(2^d-1)(2^d-2)\cdots (2^d-2^{d-1}) $$
or,
 $$ 2^d-3 \Big| 6(2^d-2^2)\cdots(2^d-2^{d-1})=6\cdot 2^{2+3+\cdots+d-1}(2^{d-2}-1)(2^{d-3}-1)\cdots(2^2-1).$$
 Since $\gcd(2^d-3,3)=\gcd(2^d-3,2)=\gcd(2^d-3,2^{d-2}-1)=1$, the nonexistence of $H$ is equivalent to
\begin{equation}\label{eq:divisibility false}
2^d-3\nmid (2^{d-3}-1)(2^{d-4}-1)\cdots(2^3-1).
\end{equation}
Moreover, the nonexistence of such $H$ also follows from a conjecture in number theory. To give the motivation, let us recall Bang's Theorem (a corollary of Zsigmondy's Theorem) which asserts:

``For any positive integer $d\neq 1, 6$, there is a prime factor of $2^d-1$ which is not a factor of $2^i-1$ for any $i<d$.''

For a sequence of nonzero integers $(a_n)_{n\ge 1}$, the \emph{Zsigmondy set} of the sequence is the set
$\{n\geq 1 | \textrm{ every prime factor of $a_{n}$ divides $a_m$ for some some $m<n$} \}$. For many interesting sequences, the Zsigmondy sets are finite (and very small). For example, the Zsigmondy set of $(2^d-1)_{d\ge1}$ is $\{1,6\}$. 

\begin{conjecture}
For any $d\ge 8$, there is a prime divisor of $2^d-3$ that does not divide $2^i-3$ for any $i<d$. (After checking the simple cases $d<8$, this conjecture is equivalent to the statement that the Zsigmondy set of the sequence $(2^i-3)$ is $\{1,2,7\}$.)
\end{conjecture}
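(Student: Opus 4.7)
The plan is to reduce the conjecture to a quantitative statement about the multiplicative order of $2$ modulo primes dividing $2^d-3$, then control the contribution of ``small-order'' primes via a sieve-style bound, and finally rely on direct computation for the remaining small values of $d$.

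First I would reformulate. Suppose $p$ is an odd prime dividing both $2^d-3$ and $2^i-3$ for some $1\le i<d$. Subtracting yields $p\mid 2^i(2^{d-i}-1)$, hence $p\mid 2^{d-i}-1$, so $\mathrm{ord}_p(2)\le d-i\le d-1$. Conversely, writing $t=\mathrm{ord}_p(2)$ and letting $i_0\in\{1,\dots,t-1\}$ be the unique exponent with $2^{i_0}\equiv 3\pmod p$, we have $p\mid 2^d-3$ iff $d\equiv i_0\pmod t$, and $p$ is a primitive divisor of $2^d-3$ iff $i_0=d$, equivalently $\mathrm{ord}_p(2)\ge d+1$. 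So the conjecture is equivalent to the assertion that for every $d\ge 8$, some prime divisor $p$ of $2^d-3$ satisfies $\mathrm{ord}_p(2)\ge d+1$.

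Next I would try to bound the imprimitive part
$$N_d:=\prod_{\substack{p^a\,\|\,2^d-3\\ \mathrm{ord}_p(2)\le d}}p^a.$$
Any prime contributing to $N_d$ is a primitive prime divisor of $2^t-1$ for some $t=\mathrm{ord}_p(2)\le d-1$ and also satisfies the congruence $2^{d\bmod t}\equiv 3\pmod p$, so in particular $p$ divides $\gcd\bigl(2^d-3,\;2^t-1\bigr)$ for this specific $t$. Using a lifting-the-exponent style estimate to control $v_p(2^d-3)$ in terms of $v_p(2^t-1)$ and $v_p(d-i_0)$, the goal is to show $\log N_d=O(d\log d)$. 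Since $\log(2^d-3)\ge(d-1)\log 2$, this would force a primitive prime divisor for all $d$ past some explicit threshold $D_0$, after which direct factorization handles the range $8\le d\le D_0$.

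The hard part will be making the bound on $N_d$ rigorous. Unlike $2^n-1$, which factors as $\prod_{t\mid n}\Phi_t(2)$ and admits the classical Bang/Zsigmondy analysis, the sequence $(2^n-3)_{n\ge 1}$ is not a strong divisibility sequence and has no cyclotomic factorization; it is a linear recurrence with characteristic roots $2$ and $1$, but not a Lucas sequence in the sense covered by the Bilu--Hanrot--Voutier theorem. A Wieferich-type prime with $p^2\mid 2^{\mathrm{ord}_p(2)}-1$ could also inflate the multiplicity $v_p(2^d-3)$ and undermine naive bounds. Two fallback strategies are: (a) invoke Stewart's effective lower bounds on the greatest prime factor of non-degenerate binary recurrences applied to $u_n=2^n-3$, or (b) treat the relation $2^d-3=\prod p_i^{a_i}$ as an $S$-unit equation, where $S$ is the finite set of ``forbidden'' primes $\{p:\mathrm{ord}_p(2)\le d\}$, and apply Baker--W\"ustholz bounds on linear forms in $p$-adic logarithms to obtain the required size inequality. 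Preliminary computation through $d$ of moderate size is consistent with the claimed Zsigmondy set $\{1,2,7\}$ and would form the base case of the final verification.
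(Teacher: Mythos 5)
This statement is not something the paper proves: it is presented as a genuine conjecture, verified by computer only for $d\le 20000$, and the authors explicitly remark that the problem has been considered by experts and remains open even assuming the abc conjecture. (The main theorem does not depend on it; the paper proves only the implication ``conjecture $\Rightarrow$ the divisibility statement actually needed,'' and then gives a separate unconditional argument via Theorem \ref{5.2B}.) So you are attempting an open problem, and your proposal does not close it. The reformulation in your first two paragraphs is correct and is worth recording: a prime $p\mid 2^d-3$ fails to be a primitive divisor exactly when $t=\mathrm{ord}_p(2)\le d-1$ (note $t=d$ is impossible, since $t\mid d$ would force $3\equiv 2^d\equiv 1\pmod p$), so the conjecture is equivalent to $2^d-3$ having a prime divisor whose order modulo which $2$ has order at least $d+1$. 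Everything after that, however, is a research plan whose central step --- bounding the ``imprimitive part'' $N_d$ --- is precisely the open problem, and you acknowledge as much.

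Beyond the fact that the key estimate is unproven, there are two concrete defects in the plan itself. First, the stated target $\log N_d=O(d\log d)$ would not force a primitive divisor even if established, because $\log(2^d-3)\sim d\log 2=o(d\log d)$; you need the strictly stronger bound $\log N_d<(d-1)\log 2$, i.e.\ a saving over the trivial size of $2^d-3$ itself, and no mechanism for obtaining such a saving is offered (the Wieferich-type multiplicity issue you mention is exactly where naive versions break). Second, the fallbacks do not rescue the argument with current technology: effective lower bounds \`a la Stewart for the greatest prime factor of $2^n-3$ (which is not a Lucas sequence) are polynomially or even logarithmically small in $n$, whereas guaranteeing $\mathrm{ord}_p(2)\ge d+1$ by size alone would require a prime factor exceeding $2^{d-1}$; and in the $S$-unit formulation the set $S=\{p:\mathrm{ord}_p(2)\le d\}$ grows with $d$, so Baker--W\"ustholz-type bounds, whose quality degrades with the number and size of the primes allowed, yield nothing uniform in $d$. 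Your closing remark about computation for moderate $d$ matches what the paper actually does (a gcd-based verification up to $d\le 20000$), but that is evidence, not a proof.
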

\begin{proof}[Proof that the conjecture implies \eqref{eq:divisibility false}]
Let $p$ be a prime divisor of $2^d-3$  that does not divide $2^i-3$ for any $i<d$. To show \eqref{eq:divisibility false}, it suffices to show that $p\nmid 2^j-1$ for $3\le j\le d-3$. Assume the contrary that $p\nmid 2^j-1$ for some $3\le j\le d-3$. Then $p$ divides $(2^d-3)-2^{d-j}(2^j-1)=2^{d-j}-3$, a contradiction. 
\end{proof}

\begin{remark}(We thank P.~Ingram, J.~Silverman, and T.~Tucker for the following comments.) Even though the conjecture seems similar to Bang's Theorem stated above, there is an essential difference. Namely, $1$ is the identity in the multiplicative group, and a prime factor of $2^n-1$ automatically divides $2^{nk}-1$ for any $k$; in contrast, a prime number $p$ dividing $2^n-3$ only tells us (a priori) that $p$ divides $2^{nk}-3^k$ for any $k$, which is not necessarily another term in the sequence. This problem has been considered by experts, but is still open, even assuming the abc conjecture (see \cite{GNT} for an interesting connection between the abc conjecture and primitive divisor problems). 

Originally we were only able to check the conjecture for $d\le 320$ using a computer. As suggested by a referee, we replaced the prime factorization method by the gcd method, and were able to check the conjecture for $d\le 20000$.
Therefore \eqref{eq:divisibility false} holds for $d\le 20000$. 
\end{remark}

\subsection{A proof using Theorem \ref{5.2B}} The idea is to find a reasonably small integer $r$ such that $|\Sym_{k-1}:T|<\binom{k-1}{r}$ and that $|A_{(\Delta)}|$ does not divide $|T|=t$, then draw the expected conclusion using Theorem \ref{5.2B}. 

 Let $$n=2^d, \quad k=2^d-3, \quad r=\frac{d^2-d}{2}-2.$$ 
Let $t$ be defined as in \eqref{t}.
It is straightforward to check that \eqref{divide(2^d-4)!} does not hold for $d<8$. So in the following we assume $d\ge 8$. An easy inductive argument shows that 
$$d^2\le 2^{d-2}, \textrm{ for all } d\ge 8.
$$
It then follows that
$d^2\leq(k+3)/4$, thus $r+1<d^2/2-1<k/8$.

\begin{lemma}\label{lemma1}
 $(k-1)!/t<\binom{k-1}{r}$.
\end{lemma}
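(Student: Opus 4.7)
The plan is to bound both sides of the asserted inequality by explicit powers of $2$ and then verify a trivial inequality in $d$. First, I would put the left-hand side in closed form. Using \eqref{t} together with $P(2^d,2^d-3) = (2^d)!/6$ and the factorization $|AGL(d,2)| = 2^d\prod_{i=0}^{d-1}(2^d - 2^i)$, the factors $2^d$, $2^d-1$, $2^d-2$, $2^d-3$ cancel between numerator and denominator, producing
$$\frac{(k-1)!}{t} \;=\; \frac{6\prod_{i=2}^{d-1}(2^d - 2^i)}{2^d - 3}.$$

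Next, I would upper-bound this by the two trivial estimates $2^d - 2^i < 2^d$ for each $i\ge 1$ and $2^d - 3 > 2^{d-1}$ (the latter valid for $d\ge 3$), obtaining
$$\frac{(k-1)!}{t} \;<\; \frac{6\cdot 2^{d(d-2)}}{2^{d-1}} \;=\; 3\cdot 2^{d^2-3d+2}.$$

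For the right-hand side I would invoke the standard inequality $\binom{m}{j} \ge (m/j)^j$, whose strictness one gets as soon as $1 < j < m$. The relation $r+1 < k/8$ stated just before the lemma gives $8r + 8 < k$, hence $(k-1)/r > 8$, and therefore
$$\binom{k-1}{r} \;>\; 8^{r} \;=\; 2^{3(d^2-d)/2 - 6}.$$

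To conclude, it suffices to verify $3\cdot 2^{d^2-3d+2} \le 2^{3(d^2-d)/2 - 6}$; taking $\log_2$, this is equivalent to $\tfrac{1}{2}(d^2+3d) > 8 + \log_2 3$, which for $d\ge 8$ is immediate since the left side is already at least $44$. I do not anticipate any serious obstacle. The reason the argument is comfortable is that $\log_2\binom{k-1}{r}$ grows like $\tfrac{3}{2}d^2$, while $\log_2((k-1)!/t)$ grows only like $d^2$, leaving ample room to absorb the multiplicative factor of $3$ and the additive $\log_2 3$ for every $d\ge 8$.
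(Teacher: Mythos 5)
Your proof is correct and follows essentially the same route as the paper's: the same closed form $\frac{(k-1)!}{t}=\frac{6\prod_{i=2}^{d-1}(2^d-2^i)}{2^d-3}$, the same crude bound $2^d-2^i<2^d$ on each factor, and the same lower bound $\binom{k-1}{r}\ge((k-1)/r)^r$ combined with $r+1<k/8$. The only (cosmetic) difference is that you compare exponents against $8^r$ by taking $\log_2$, whereas the paper compares $r$-th roots against $4$; both verifications go through for $d\ge 8$.
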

\begin{proof}
By the definition of $t$ as in \eqref{t},
$$\aligned
(k-1)!/t
&=\frac{(2^d-4)!3!2^d(2^d-1)(2^d-2^1)\cdots(2^d-2^{d-1})}{(2^d)!}
=\frac{3!(2^d-2^2)(2^d-2^3)\cdots(2^d-2^{d-1})}{2^d-3}\\
&=\frac{6(2^d-2^{d-1})(2^d-2^2)}{2^d-3}
\cdot (2^d-2^3)\cdots(2^d-2^{d-2})\le 3(2^d)(2^d-2^3)\cdots(2^d-2^{d-2})\\
&<3(2^d)^{d-3}.
\endaligned$$
On the other hand,
$\binom{k-1}{r}\ge(\frac{k-1}{r})^r$.
So it suffices to show
$$3(2^d)^{d-3}\le (\frac{k-1}{r})^r.$$
Equivalently,
$$(3\cdot 2^{d^2-3d})^{1/r}\le \frac{k-1}{r}.$$
This inequality can be shown as follows: the left hand side satisfies
$$(3\cdot 2^{d^2-3d})^{1/r}\le (2^{d^2-3d+2})^{1/r}=2^{2(d^2-3d+2)/(d^2-d-4)}<4,$$
and the right hand side satisfies
$$\frac{k-1}{r}=\frac{2(2^d-4)}{d^2-d-4}>=4\frac{2^{d-1}-2}{d^2}>4\frac{2^{d-2}}{d^2}\ge4.$$
\end{proof}

\begin{lemma}\label{lemma2}
$(k-r)!/2$ does not divide $t$. 
\end{lemma}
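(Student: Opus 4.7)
\medskip\noindent\emph{Proof plan.} The plan is to compare the $2$-adic valuations of $t$ and $(k-r)!/2$ and show that the latter strictly exceeds the former. Throughout I denote the $2$-adic valuation by $v_2$ and the binary digit sum by $s_2$, related by Legendre's formula $v_2(m!)=m-s_2(m)$.

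First I would compute $v_2(t)$ directly from the expression
$$t=\frac{(2^d-3)!}{6\prod_{i=2}^{d-1}(2^d-2^i)}.$$
Since $v_2(2^d-2^i)=i$ for $i<d$, the denominator contributes $1+\sum_{i=2}^{d-1}i=\binom{d}{2}$ to the valuation. For the numerator, the binary expansion $2^d-3=2^{d-1}+2^{d-2}+\cdots+2^2+2^0$ has digit sum $d-1$, giving $v_2((2^d-3)!)=(2^d-3)-(d-1)=2^d-d-2$. Combining these yields $v_2(t)=2^d-d-2-\binom{d}{2}$.

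Next I would compute $v_2((k-r)!/2)$, where $k-r=2^d-1-\binom{d}{2}$. The key ingredient is the bit-complement identity $s_2(2^d-1-m)=d-s_2(m)$ valid for $0\le m\le 2^d-1$, which holds because $2^d-1$ is a string of $d$ ones in binary, so subtracting any $m$ that fits in $d$ bits flips each of its bits. Applying this with $m=\binom{d}{2}$ gives $s_2(k-r)=d-s_2(\binom{d}{2})$, and therefore
$$v_2((k-r)!/2)=(k-r)-s_2(k-r)-1=2^d-d-2-\binom{d}{2}+s_2\!\left(\binom{d}{2}\right).$$

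Subtracting the two expressions then yields $v_2((k-r)!/2)-v_2(t)=s_2(\binom{d}{2})\ge 1$, since $\binom{d}{2}\ge 1$ necessarily has a nonzero binary digit. This strict inequality forces $(k-r)!/2\nmid t$. The only step that really needs care is the bit-complement identity and the two valuation calculations for the building blocks $2^d-2^i$ and $2^d-3$; once these are in hand the rest is a one-line bookkeeping exercise using Legendre's formula, and the standing hypothesis $d\ge 8$ from the surrounding discussion comfortably ensures $\binom{d}{2}<2^d$ so that the identity applies.
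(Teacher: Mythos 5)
Your proposal is correct and follows essentially the same route as the paper: both arguments compare $2$-adic valuations and reduce the claim to the positivity of $v_2\bigl((k-r)!/(2t)\bigr)$, which in the paper comes out as $(r+2)-v_2((r+2)!)$ and in your version as $s_2\bigl(\binom{d}{2}\bigr)$ — the same quantity, since $r+2=\binom{d}{2}$ and $v_2(m!)=m-s_2(m)$. The only difference is bookkeeping: you use Legendre's formula and the bit-complement identity, while the paper telescopes the products directly using $v_2(2^d-i)=v_2(i)$.
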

\begin{proof}
For any integer $i$, denote by $v_2(i)$ the largest integer $v$ such that $2^v|i$. For a nonzero rational number $i/j$, define $v_2(i/j)=v_2(i)-v_2(j)$. Then it suffices to show that
$$v_2\big((k-r)!/(2t)\big)>0.$$
Indeed,
$$\aligned
v_2((k-r)!/(2t))&=v_2\Big(\frac{(k-r)!2^d(2^d-1)(2^d-2^1)\cdots(2^d-2^{d-1})}{2\cdot2^d(2^d-1)\cdots5\cdot4}\Big)\\
&=v_2\Big((2^d-1)(2^d-2^1)\cdots(2^d-2^{d-1})/2\Big)+v_2\Big(\frac{(2^d-3-r)!}{(2^d-1)\cdots5\cdot4}\Big)\\
&=0+1+2+\cdots+(d-1)-1-v_2\Big(\frac{(2^d-1)\cdots5\cdot4}{(2^d-3-r)!}\Big)\\
&=d(d-1)/2-v_2\Big((2^d-1)(2^d-2)\cdots(2^d-2-r)\Big)\\
&=(r+2)-v_2\Big((r+2)!\Big) \quad\quad \textrm{(because $v_2(2^d-i)=v_2(i)$ for $1\le i\le 2^d-1$)}\\
&=(r+2)-\sum_{i=1}^\infty\big\lfloor \frac{r+2}{2^i}\big\rfloor
>(r+2)-\sum_{i=1}^\infty\frac{r+2}{2^i}=(r+2)-(r+2)=0.
\endaligned
$$
Therefore, $(k-r)!/2$ does not divide $t$. 
\end{proof}

Finally, we can now prove that $H=AGL(d,2)$ is impossible. Thanks to Lemma \ref{lemma1}, we only need to consider the three cases (i)--(iii) in Theorem \ref{5.2B}. Since $a=k-1=2^d-3\ge 2^8-3>8$, we do not need to consider Case (iii). 

For Case (ii): first note that $r+1<k/8$ implies  
$\frac{r+1}{k-1-r}<1/2$. Then
$$|\Sym_a:T|=(k-1)!/t<\binom{k-1}{r}=\frac{r+1}{k-1-r}\binom{k-1}{r+1}<1/2\binom{k-1}{(k-1)/2}$$ 
so this case is also impossible.

For Case (i): there exists some $\Delta\subseteq\{1,\dots,k-1\}$ such that $|\Delta|<r$ and $A_{(\Delta)}\le T$. Since $A_{(\Delta)}\cong A_{k-1-|\Delta|}$, and $|A_{k-1-|\Delta|}|=(k-1-|\Delta|)!/2$, it follows that $(k-1-|\Delta|)!/2$ divides $t$, therefore $(k-r)!/2\;|\;t$, contradicting Lemma \ref{lemma2}.  So this case is also impossible. 

This completes the proof.
\bigskip

\medskip\noindent{\it Acknowledgments.}  We are grateful to the anonymous referees for carefully reading
through the manuscript and giving us many constructive suggestions to improve the presentation.
We would also like to thank P.~Ingram, J.~Silverman, and T.~Tucker for their helpful comments on primitive divisors and Zsigmondy sets. Magma Computational Algebra System \cite{Magma} did many computations that are essential to our project.


\begin{thebibliography}{99}
\bibitem{AAC}
J. Andr\'e, J. Ara\'ujo, and P. J. Cameron.
\newblock The classification of partition homogeneous groups with
              applications to semigroup theory.
\newblock {\em Journal of Algebra},
452 (2016), 288--310.

\bibitem{Biggs}
N. Biggs, 
\newblock Algebraic graph theory, Second edition. 
\newblock {\em Cambridge Mathematical Library.} Cambridge
University Press, Cambridge, 1993.

\bibitem{Magma}
W. Bosma, J. Cannon, and C. Playoust, The Magma algebra system. I. The user language, J. Symbolic Comput., 24 (1997), 235--265.

\bibitem{CKR}
E. Cheng, J. Kelm, and J. Renzi.
\newblock Strong matching preclusion of $(n,k)$-star graphs.
\newblock {\em Theoretical Computer Science}, 615:91--101, 2016.

\bibitem{CL}
E. Cheng and  L. Lipt\'ak.
\newblock Conditional matching preclusion for $(n,k)$-star graphs.
\newblock {\em Parallel Processing Letters}, 23:1350004 (13 pages), 2013.

\bibitem{CLLSS}
E. Cheng, L. Li, L. Lipt\'ak, S. Shim, and D.E. Steffy, On the problem of determining which $(n,k)$-star graphs are Cayley graphs, 
\newblock{\em Graphs and Combinatorics}, 33 (2017), no. 1, 85--102. 

\bibitem{CLS}
E. Cheng, L. Lipt\'ak, and D.E. Steffy.
\newblock Strong local diagnosability of $(n,k)$-star graphs and Cayley graphs generated by 2-trees with missing edges.
\newblock {\em Information Processing Letters}, 113:452--456, 2013.

\bibitem{CQS}
E. Cheng, K. Qiu, and Z. Shen.
\newblock The number of shortest paths in the $(n,k)$-star graph.
\newblock {\em Discrete Mathematics, Algorithms and Applications}, 6:1450051 (17 pages), 2014.

\bibitem{CC}
W.-K. Chiang and R.-J. Chen
\newblock The $(n,k)$-star graph: A generalized star graph.
\newblock {\em Information Processing Letters}, 56:259--264, 1995.

\bibitem{CHYT}
C.-W. Chiu, K.-S. Huang, C.-B. Yang, and C.-T. Tseng.
\newblock An adaptive heuristic algorithm with the probabilistic safety vector for fault-tolerant routing on the  $(n,k)$-star graph.
\newblock {\em International Journal of Foundations of Computer Science}, 25:723--743, 2014.


\bibitem{DM}
J. Dixon and B. Mortimer, Permutation groups. Graduate Texts in Mathematics, 163. Springer-Verlag, New York.


\bibitem{DoM}
E. Dobson, A. Malni\v c,
\newblock Groups that are transitive on all partitions of a given shape. 
\newblock {\em J. Algebraic Combin.}, 42 (2015), no. 2, 605--617.


\bibitem{DCW}
D.-R. Duh, T.-L. Chen and Y.-L. Wang.
\newblock $(n-3)$-edge-fault-tolerant weak-pancyclicity of $(n,k)$-star graphs.
\newblock {\em Theoretical Computer Science}, 516:28--39, 2014.

\bibitem{FLWX}
X.~G.~Fang, C.~H.~Li, J.~Wang, M.~Y.~Xu.
\newblock On cubic Cayley graphs of finite simple groups.
\newblock {\em Discrete Mathematics}  244 (2002), no. 1-3, 67--€"75. 

\bibitem{GM}
E. Ghaderpour, D. W. Morris.
\newblock Cayley graphs on nilpotent groups with cyclic commutator subgroup are Hamiltonian.
\newblock{\em Ars Mathematica Contemporanea} 7 (2014), no. 1, 55--72. 

\bibitem{GNT}
C. Gratton, K. Nguyen, T. J. Tucker, 
\newblock ABC
implies primitive prime divisors in arithmetic dynamics. 
\newblock{\em Bull. Lond. Math. Soc. 45} (2013), no. 6, 1194--1208. 



\bibitem{HHTH}
H.-C. Hsu, Y.-L. Hsieh, J.J.M. Tan, and L.H. Hsu.
\newblock Fault Hamiltonicity and fault Hamiltonian connectivity of
the $(n,k)$-star graphs.
\newblock {\em Networks}, 42:189--201, 2003.

\bibitem{HL}
L.-H. Hsu and C.-K. Lin.
\newblock Graph Theory and Interconnection Networks.
\newblock CRC Press, 2009.

\bibitem{K}
W. M. Kantor. 
\newblock $k$-homogeneous groups. 
\newblock {\em Math. Z.} 124 (1972), 261–265

\bibitem{LX}
X.-J. Li and J.-M. Xu.
\newblock Fault-tolerance of  $(n,k)$-star networks.
\newblock {\em Applied Mathematics and Computation}, 248:525--530, 2014.


\bibitem{LPS}
M. Liebeck, C. Praeger, and J. Saxl.
The classification of 3/2-transitive permutation groups and 1/2-transitive linear groups, 
\newblock{\em Proceedings of the American Mathematical Society},  arXiv:1412.3912v1. 

\bibitem{LW}
D. Livingstone, A. Wagner.
\newblock Transitivity of finite permutation groups on unordered sets, 
\newblock {\em Math. Z.}, 90 (1965), 393–403.


\bibitem{LXF}
Y. Lv, Y. Xiang, and J. Fan.
\newblock Conditional fault-tolerant routing of  $(n,k)$-star graphs.
\newblock {\em International Journal of Computer Mathematics}, 93:1695--1707, 2016.



\bibitem{MS}
W. J. Martin, B. E.  Sagan.
\newblock A new notion of transitivity for groups and sets of permutations. 
\newblock {\em J. London Math. Soc.}, (2) 73 (2006), no. 1, 1–13.


\bibitem{M2015}
D. W. Morris. 
\newblock  Odd-order Cayley graphs with commutator subgroup of order $pq$ are hamiltonian. 
\newblock{\em Ars Mathematica Contemporanea}, 8 (2015), no. 1, 1--28. 

\bibitem{M2016}
D. W. Morris. 
\newblock  Infinitely many nonsolvable groups whose Cayley graphs are hamiltonian. 
\newblock{\em Journal of Algebra Combinatorics Discrete Structures and Applications} 3 (2016), no. 1, 13--30. 


\bibitem{Sabidussi} G. Sabidussi. 
\newblock On a Class of Fixed-Point-Free Graphs,  
\newblock{\em Proceedings of the American Mathematical Society} 9 1958 800--804. 
    

\bibitem{WC}
Y. Wei and F. Chen.
\newblock Generalized connectivity of  $(n,k)$-star graphs.
\newblock {\em International Journal of Foundations of Computer Science}, 24:1235--1241, 2013.

\bibitem{XLZHG}
X.-Xu, X. Li, S. Zhou, R.-X. Hao, and M.-M Gu.
\newblock The $g$-good-neighbor diagnosability of $(n,k)$-star graphs.
\newblock {\em Theoretical Computer Science}, 659:53--63, 2017.

\end{thebibliography}
\end{document}